\documentclass{amsart}
%[draft]
\date{May 16, 2017}

\usepackage{amssymb,graphicx,amssymb,amsmath,lineno,amsopn}
\usepackage[ansinew]{inputenc}
\usepackage[T1]{fontenc}

\usepackage{textcomp}
\usepackage{charter}
\usepackage[]{bera}
\usepackage[charter]{mathdesign}

\usepackage{ifpdf}
\ifpdf
\usepackage[%
  pdftitle={Instructions for use of the document classelsart},%
  pdfauthor={Simon Pepping},%
  pdfsubject={The preprint document class elsart},%
  pdfkeywords={instructions for use, elsart, document class},%
  pdfstartview=FitH,%
  bookmarks=true,%
  bookmarksopen=true,%
  breaklinks=true,%
  colorlinks=true,%
  linkcolor=blue,anchorcolor=blue,%
  citecolor=blue,filecolor=blue,%
  menucolor=blue,pagecolor=blue,%
  urlcolor=blue]{hyperref}
\else
\usepackage[%
  breaklinks=true,%
  colorlinks=true,%
  linkcolor=blue,anchorcolor=blue,%
  citecolor=blue,filecolor=blue,%
  menucolor=blue,pagecolor=blue,%
  urlcolor=blue]{hyperref}
\fi

\makeatletter
\def\elsartstyle{%
    \def\normalsize{\@setfontsize\normalsize\@xiipt{14.5}}
    \def\small{\@setfontsize\small\@xipt{13.6}}
    \let\footnotesize=\small
    \def\large{\@setfontsize\large\@xivpt{18}}
    \def\Large{\@setfontsize\Large\@xviipt{22}}
    \skip\@mpfootins = 18\p@ \@plus 2\p@
    \normalsize
} \@ifundefined{square}{}{} \makeatother

\newtheorem{teo}{Theorem}

\newtheorem{lema}{Lemma}
\theoremstyle{definition}
\newtheorem{defi}{Definition}
\theoremstyle{remark}

\newtheorem{example}{Example}

\newcommand{\brn}{\begin{eqnarray*}}
\newcommand{\ern}{\end{eqnarray*}}

\title[Ratio asymptotic for bi-orthogonal matrix polynomials]{Ratio asymptotic for bi-orthogonal matrix polynomials with unbounded recurrence coefficients.
% A nonsymmetric case.
} %\tnoteref{t1}

% \subjclass@2010{33C45, 33C47, 42C05, 47A56, 65D32.}

 \begin{document}
\author[A. Branquinho]{Amilcar  Branquinho}
\address{CMUC 	and Department of Ma\-the\-ma\-tics, University of Coimbra, Apartado 3008, EC Santa Cruz, 3001-501 COIMBRA, Portugal.}
\email{ajplb@mat.uc.pt}
\thanks{AB acknowledges Centro de Matem\'{a}tica da Universidade de Coimbra (CMUC) -- UID/MAT/00324/2013, funded by the Portuguese Government through FCT/MEC and co-funded by the European Regional Development Fund through the Partnership Agreement PT2020.}

\author[J.C. García-Ardila]{Juan Carlos García-Ardila}
\address{Departamento de Matemáticas, Universidad Carlos III de Madrid, Avenida Universidad 30, 28911 Leganés, Spain}
\email{jugarcia@math.uc3m.es }

\author[F. Marcellán]{Francisco Marcellán}
\address{Departamento de Matemáticas, Universidad Carlos III de Madrid and Instituto de Ciencias Matemáticas (ICMAT), Avenida Universidad 30, 28911 Leganés, Spain}
\email{pacomarc@ing.uc3m.es }
\thanks{MM \& FM thanks financial support from the Spanish ``Ministerio de Economía y Competitividad" research project MTM2012-36732-C03-01,  \emph{Ortogonalidad y aproximación; teoría y aplicaciones}}

 \subjclass{33C45, 33C47, 42C05, 47A56, 65D32.}
 \keywords{ratio asymptotic; quadrature formulae; Markov functions; matrix biorthogonal polynomials; generalized Chebyshev polynomials}

 \begin{abstract}
In this work is presented a study on matrix biorthogonal polynomials sequences that satisfy a nonsymmetric recurrence relation with unbounded coefficients. The ratio asymptotic for this family of matrix biorthogonal polynomials is derived in quite general assumptions. It is considered some illustrative examples.
 \end{abstract}

 \maketitle

\section{Introduction} \label{sec:1}

The study of the outer ratio asymptotics i.e. the limit of the ratio of two consecutive polynomials $p_{n} $ and $p_{n+1} $ of a sequence of polynomials,~$\{ p_n \}_{n \in \mathbb N}$, orthogonal with respect to an inner product outside the convex hull of the support of the measure of orthogonality %$\langle . , . \rangle$
has attracted
the interest of many researchers in the last decades. Since the first study %was published by
of %P.
Nevai in 1979 (cf.~\cite{N1}) for orthogonal polynomials with respect to a measure supported on a infinite subset of the real line   with convergent recurrence coefficients, more general situations have been considered, such as the case of asymptotically periodic recurrence coefficients with a finite number of accumulation points (cf.~\cite{GW1},~\cite{W2},~\cite{W1}) or the case of unbounded recurrence coefficients (cf.~\cite{W2}).

%\medskip
In 1993  %A. J.
Durán (cf.~\cite{Duran3}) gave the characterization of symmetric bilinear forms for which the multiplication operator by a polynomial is a symmetric one.
There, necessary and sufficient conditions were deduced, so that, a sequence of scalar polynomials ~$\{ p_n \}_{n \in \mathbb N} $ satisfying a $(2N+1)$-term recurrence~relation
 \begin{gather*}
 h(x) \, p_n(x)=c_{n,0} \, p_n(x)+\sum_{k=1}^{N} \big( \overline{c}_{n,k} \, p_{n-k}(x) +c_{n+k,k} \, p_{n+k}(x) \big) \, ,
 \end{gather*}
 is orthogonal with respect to a symmetric bilinear form (generalization of the Favard's theorem). In particular, their attention was focused on the discrete Sobolev type inner products. In that work, the author gave  a first idea to connect  %the relation between
 scalar orthogonal  polynomials with respect to a bilinear form and matrix orthogonal polynomials with respect to a positive definite matrix  of measures. From the above result, %A. J.
Durán and %W.
Van Assche~\cite{DW1} proved  that if~$
\{ p_n \}_{n \in \mathbb N}
 $
is a sequence of scalar polynomials satisfying a $(2N+1)$-term recurrence relation, they are  related to a matrix polynomial sequence
$
\{ P_n \}_{n \in \mathbb N}
$
satisfying a matrix three-term recurrence~relation
\begin{gather}\label{11}
x \, P_n(x)=A_{n+1} \, P_{n+1}(x)+B_{n} \, P_n(x)+ {A}^{*}_n \, P_{n-1}(x) \, ,\ \ n \in \mathbb N \, ,
\end{gather}
with initial conditions  $P_{0}(x)=I_{N%\times N
}$ and $P_{-1}(x)=\pmb 0_{N%\times N
} \, $,
where for each $n \in \mathbb N$, $A_{n}$ is an upper triangular, nonsingular matrix and $B_{n}$ is a Hermitian matrix.

The above results reawakened the interest on matrix orthogonal polynomials (cf. the survey paper~\cite{DPS}). So, in~\cite{D} %A. J.
Durán and %P.
López-Rodrí\-guez studied properties for the zeros of a sequence of matrix polynomials~$\{ P_n \}_{n \in \mathbb N}
$
which are orthonormal  with respect to a positive definite matrix of measures~$W$. Next,  %A. J.
Durán in~\cite{Duran4} showed two important results: the first one is a  quadrature formula for matrix polynomials and the second one is the Markov theorem for matrix polynomials when again the matrix of measures is positive definite. In~\cite{Duran1}, the author  deals with  the outer ratio asymptotic for matrix orthogonal polynomials. Therein, %In that paper, %A. J.
Durán obtained the asymptotic behavior of two consecutive polynomials belonging to the matrix Nevai class, i.e. these polynomials satisfy a  three-term recurrence formula as in~\eqref{11} where, again, $A_{n}$ are nonsingular, upper triangular matrices and $B_{n}$ are Hermitian matrices for all $n \in \mathbb N$, and  such that $A_n \to A$ and $B_n \to B$.
 Later on, %the same author in a joint work with
 Durán and %E.
 Daneri-Vias analyzed the  above case but when  the matrix sequences
 $
 (A_n)_{n \in \mathbb N}
 $,
 $
(B_n)_{n\in \mathbb N}
 $
diverge in a particular way%For the details of these results
~(cf.~\cite{Duran2} for details).

Recently, %H. O.
Yakhlef and  %F.
Marcellán~\cite{mar}  have studied the outer relative asymptotics of sequences of matrix orthogonal polynomials for Uvarov perturbations in  the degenerate case,~i.e. given a positive definite matrix of measures $\alpha$, and its corresponding sequences of matrix orthonormal polynomials,
 $
\{ P_n^{\alpha} \}_{n\in \mathbb N}
 $,
satisfying a three-term recurrence relation as in~\eqref{11}, they
define a new matrix of measures~$\beta$~as
\begin{gather*}
d\beta(u)= d\alpha(u)+M\delta(u - c) \, ,
\end{gather*}
where $M$ is a positive definite matrix, $\delta(u-c)$ is the Dirac measure  supported at $c$ that is located outside the support of $d\alpha$, and the sequence of matrix orthonormal polynomial associated with $d\beta$, $ \{ P_n^\beta \}_{n \in \mathbb N} $. Then,
they study the outer relative asymptotic %for
between the sequences
$\{ P_n^{\beta}\}_{n\in \mathbb N}
$
%with respect  to the sequence
and
$\{ P_n^{\alpha} \}_{n\in \mathbb N}
$
under quite general assumptions on  the coefficients of the three-term recurrence relation
$
(A_n)_{n\in \mathbb N}
$,
$
(B_n)_{n\in \mathbb N}
$.

When the matrix of measures, $W$, is no longer Hermitian, we can define a bilinear (respectively, sesquilinear form) in $\mathbb R^{N \times N}[x] $ (respectively, in $\mathbb C^{N \times N}[x] $) and deal with sequences of  biorthogonal matrix polynomials,
$\{ V_n \}_{n\in \mathbb N} \,
$,
$\{ G_n \}_{n\in \mathbb N}
$,
which play the role of the left and right-orthogonalities (cf. Definition~\ref{defi1}).
%As we already have said, in those papers  the matrix of measures always is supposed Hermitian,  this yields  to have sequences of matrix orthogonal polynomials.
%However, in the literature, non-positive definite matrix of measuress appear in a natural  way and with them, the concept of biorthogonality that is equivalent to have two sequences of matrix polynomials
%$\{ V_n \}_{n\in \mathbb N} \,
%$
%and
%$\{ G_n \}_{n\in \mathbb N}
%$
%which play the role of the orthogonality to left and right (cf. Definition~\ref{defi1}).
It can be proven that these sequences satisfies a three-term recurrence relation %~of~the~form
\begin{gather}  \label{re}
x \, V_n(x) =\alpha_{n} \, V_{n+1}(x)+\beta_{n} \, V_n(x)+\gamma_n \, V_{n-1}(x) \, , \\
 x \, G_n(x) =a_{n} \, G_{n+1}(x)+b_{n} \, G_n(x) + c_n \, G_{n-1}(x) \, ,
 \label{re1}
\end{gather}
where $ a_n $,  $\alpha_{n} $,  $b_{n} $, $\beta_{n} $, are nonsingular matrices.  Without loss of generality we can suppose that $ a_n $, $\alpha_{n}$ are  lower triangular matrices and $c_{n} $, $\gamma_{n}$ are upper triangular matrices (cf.~\cite{DW1}).

As we  have said above, matrix polynomials defined  by the recurrence formulas as~\eqref{re} appear in a natural way in the literature and its study paid an increasing attention in the last decades.  For example in~\cite{Bra-Cot-Fou}, the authors give a matrix interpretation of the multiple orthogonality in terms of matrix orthogonal polynomials satisfying the same kind of recursion formulas. On the other hand, in  ~\cite{ACM},~\cite{ACM1},~\cite{gar1} were studied perturbations of measures (Christoffel, Geronimus, and Geronimus-Uvarov) which yield to non-positive definite matrix of measuress, and thus, to the biorthogonality.
%A simple   example is the follow,

\begin{example}\label{exa1}
Let $\mu$ be a scalar measure supported on the real line, and
$ \{ p_n \}_{n\in \mathbb N}
$
its corresponding sequence of monic orthogonal polynomials. If $W $ is a matrix polynomial of degree $M$, then we can define a new measure  $d \hat\mu=W \, d\mu$ (Christoffel transformation of the measure) which clearly is non-positive definite as~$W$ needs not to be identical to
${W}^{\mathsf T}$. In particular,~taking
%\begin{gather*}
$W(x)=
\left(
\begin{smallmatrix}
x&1\\ 0 &x
\end{smallmatrix}
\right)
\, $,
%\end{gather*}
then following the techniques developed in~\cite{ACM}, it is easy to see that the sequence of   matrix polynomials
\begin{gather*}
 V_n(x)
=
\left( \begin{matrix}
%\frac
 {K_{n} (x,0)}/{p_{n}(0)} &
 \frac{p_n (0) \, K_n (x,0) - W(p_n,p_{n+1}) (0) \, p_n (x) }{p_n^2(0) \, x}
%\dfrac{p_{n+1}(x)-\dfrac{p_{n+1}(0)}{p_n(0)}p_n(x)-\dfrac{W(p_n,p_{n+1})(0)}{(p_n(0))^2}x p_n(x)}{x^2}
\\ %[10pt]
 0 &
%\dfrac
 {K_{n}(x,0)}/{p_n(0)}
\end{matrix}
\right) \, , \\
%\end{gather*}
%\begin{gather*}
 G_n(x) =-
\begin{pmatrix}
%\dfrac
{K_n(x,0)}/{p_n(0)} & 0 \\%[6pt]
-%\dfrac
\big( {K_n(x,0)p_n'(0)}+
%\dfrac{1}{p_n(0)}
%\dfrac
\frac{\partial K_n(x,0)}{\partial y} \big)/p_n(0) &
%\dfrac
{K_n(x,0)}/{p_n(0)}
\end{pmatrix}
\, . %\notag
\end{gather*}
with
%\begin{gather*}
$W(p_n,p_{n+1})(x)=p_n(x) p'_{n+1}(x)-p_{n+1}(x)p'_{n}(x) \, $,
%\end{gather*}
and
\begin{gather*}
K_n(x,0) = %\dfrac
\big( {p_n(0)} \, {p_{n+1}(x)- %\dfrac
{p_{n+1}(0)} \, p_n(x)}\big) / {x}
\end{gather*}
satisfies %that
\begin{gather*}
\int  V_m(x) \, W(x) \,  G_n(x) \, d \mu (x) = I_N \, \delta_{n,m} \, ,
\end{gather*}
%If we define  $V_n(x)=\widetilde V_n(x)$ and $G_n(x)=\widetilde G_n(x)\Delta_{n}^{-1} $, then
%\begin{gather*}
%\int  V_m(x) \, W(x) \, G(x)dx=I_{N%\times N
%} \, \delta_{n,m} \, ,
%\end{gather*}
i.e.
$\{ V_n \}_{n\in \mathbb N}
$,
$\{ G_n \}_{n\in \mathbb N}
$
are sequences of biorthonormal polynomial with respect to~$W$ and
satisfy the %following
three-term recurrence relations
%\begin{gather*}
%x V_n (x) = V_{n+1}(x) + \beta_n \, V_n (x) + \gamma_n \, V_{n-1} (x) \, , \\
%x G_n (x) = G_{n+1}(x) \, \gamma_{n+1} + G_n (x) \, \beta_n  + G_{n-1} (x) \, .
%\end{gather*}
\eqref{re} and~\eqref{re1} with $a_n = \gamma_{n+1}$, $b_n = \beta_n$, and $c_n = \alpha_{n-1} = I_N$ (cf. Theorem~\ref{t:BMF}) .
\end{example}
Outer ratio asymptotics for the class of matrix orthogonal polynomials satisfying the three-term recurrence formula as in~\eqref{re} was studied for the first time in~\cite{BMF1}. There, the authors analyzed the case of convergent recurrence coefficients by introducing an analog of the  Nevai class of matrix polynomials for the nonsymmetric case, the generalized matrix Nevai class.

In the present contribution our aim is to generalize those results when the coefficients of  the nonsymmetric recurrence formula diverge in a particular~way.

%\bigskip

The structure of the manuscript is as follows. Section~\ref{sec:2} provides the basic background about  matrix biorthogonal polynomials $\{V_n \}_{n \in \mathbb N}$ and $\{G_n \}_{n \in \mathbb N}$ that satisfy dual nonsymmetric recurrence relations. Here, we establish the relations %hip 
between the zeros of these two families of polynomials and discuss the most appropriate way of scaling these matrix polynomials in order to obtain its asymptotic behavior. Section~\ref{sec:3} deals with the outer ratio asymptotics for left and right-orthogonal matrix polynomials with varying recurrence coefficients. We also deduce a quadrature %formula 
and a Liouville-Ostrogradski formulas for the right-orthogonal polynomials. Section~\ref{sec:4} is focused on our main result (Theorem
\ref{main result}), the outer ratio asymptotics of matrix orthogonal polynomials satisfying recurrence formulas with nonsymmetric and nonsingular recurrence coefficients diverging in a particular way. Section~\ref{sec:5} is devoted to the study of the case when certain matrix appearing in the recurrence formula is singular.

%\bigskip

%%%%%%%%%%%%%%%%%%%%%%%%%%%%%%%%%%%%%%%%%%%%%%%%%%%%%%%%%%%%%%%%%%%%%%%%%%%%%%%%

\section{Matrix biorthogonal polynomials } \label{sec:2}

Let us a consider a {\it quasidefinite $N\times N$ matrix of measures}, $W$, i.e.
$W = \big( w_{i,j} \big)_{i,j=0}^{N-1}$, with measures, $w_{i,j}$, $ i,j \in \{ 0 , \ldots ,N-1\} $, supported on the real line but not necessarily positive definite with finite moments, $U_n = \int x^n \, d W(x) \, $, $n \in \mathbb N$, and such that the Hankel determinants satisfy
\begin{gather*}
\det \big( \left( U_{i+j} \right)_{i=0,\ldots , n}^{j=0,\ldots , n} \big) \not = 0 \, , \ \ \ n \in \mathbb N \, .
\end{gather*}
 We will assume, without loss of generality, that the matrix of measures is normalized by $U_0 = \int d W(x) =I_N$.
If $P $ and $R $ are matrix polynomials in $\mathbb{C}^{N\times N}[x]$, then we introduce the following {\it sesquilinear form},
\begin{gather*}
\langle P \, , \, R \rangle = \int P(x) \, dW (x) \, R^{\mathsf T}(x) \, , \  P \, , \ R \, \in \mathbb C^{N\times N}[x] \, .
\end{gather*}
\begin{defi}\label{defi1}
Let $W$ be a quasidefinite $N\times N$ matrix of measures.
The matrix polynomial sequences,
$\{ V_n \}_{n\in \mathbb N} $ (respectively, $\{ G_n \}_{n\in \mathbb N} $),
such that for every $n, m\in \mathbb N$, $\deg V_n(x)= n $
(respectively, $\deg G_m(x)=m$) and
\begin{gather*}
\int V_n(x) \, dW (x) \, x^m =\Omega^{(1)}_{n} \, \delta_{n,m}\, , \ \  m = 0, \ldots , n \\
\mbox{(respectively, } \ \ \int  x^n \, dW (x) \, G_m(x)=\Omega^{(2)}_{m} \, \delta_{n,m}\, , \ \  n = 0, \ldots , m \, \mbox{)},
\end{gather*}
with $\delta_{n,m}$ is the Kronecker symbol and
$\Omega^{(1)}_{n}$ (respectively, $\Omega^{(2)}_{n}$)
nonsingular matrices for $n \in \mathbb N$,
are said to be {\it the left} (respectively, {\it right}) {\it orthogonal polynomial sequences} with respect to $W$.
\end{defi}
%As before, we can take without loss of generality that the matrices  $\Omega^{(1)}_{n}$, (respectively, $\Omega^{(2)}_{n}$)  are nonsingular upper (respectively, lower) triangular matrices~(cf.~\cite{DW1}).
We also refer to
$\{ V_n \}_{n\in \mathbb N}
$,
$\{ G_n \}_{n\in \mathbb N}
$
as %the
biorthogonal polynomial sequences with respect to the quasidefinite matrix of measures~$W$, when
%. Observe that this definition is supported~on %in the fact that
\begin{gather*}
\int V_n(x) \, dW (x) \, G_m(x) = I_N \, \delta_{n,m} \, , \ \ n , m \in \mathbb N \, .
\end{gather*}
%where $\Delta_n$ is a nonsingular matrix. Hereinafter  we will assume that for every  $n\in \mathbb N $, $\Delta_n = I_{N%\times N
%}$.
%The above conditions allow to us enunciate the following theorem
\begin{teo}[cf. \cite{BMF2}, Theorem 4]\label{t:BMF}
Given a quasidefinite matrix of measures~$W$, then its biorthogonal polynomial sequences,
$ \{ V_n \}_{n\in \mathbb N}
$,~$ \{ G_n \}_{n\in \mathbb N}
$
%are biorthogonal with respect to a regular matrix of measures, if and only if
%with
 satisfy the %following
three-term recurrence~relations
\begin{gather}\label{5trr_Vn}
x \, V_n(x)=A_n \, V_{n+1}(x)+B_n \, V_n(x)+C_n \, V_{n-1}(x) \, , \ \   n \in \mathbb N \, , \\
%\end{gather}
%\begin{gather}
\label{5trr_Gn}
x \, G_n(x)
=
G_{n-1}(x) \, A_{n-1}+ G_n(x) \, B_n+ G_{n+1}(x) \, C_{n+1} \, , \ \  n \in \mathbb N \, ,
\end{gather}
with $V_{-1}(x)=\pmb 0_{N%\times N
}$, $V_{0}(x)=I_{N%\times N
}$, and
 $G_{-1}(x)=\pmb 0_{N%\times N
 }$, $G_{0}(x)=I_{N%\times N
 } $.
Here, $A_n$ and $C_{n} $  are nonsingular matrices for every $n\in \mathbb N$.  \end{teo}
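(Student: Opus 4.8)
The plan is to derive the two recurrences \eqref{5trr_Vn} and \eqref{5trr_Gn} from the biorthogonality relations, and then to extract nonsingularity of the coefficients $A_n$ and $C_n$ from the leading-order information contained in the biorthogonality. First I would fix the normalization: since each $V_n$ has degree exactly $n$, write $V_n(x) = \kappa_n^{(1)} x^n + (\text{lower order})$ with $\kappa_n^{(1)}$ an invertible matrix (invertibility follows because, by biorthogonality, $\int V_n(x)\,dW(x)\,G_n(x) = I_N$ while $\int V_n(x)\,dW(x)\,x^m = 0$ for $m<n$, so the pairing of $V_n$ with the top coefficient of $G_n$ is invertible). Likewise $G_n(x) = x^n \kappa_n^{(2)} + \cdots$ with $\kappa_n^{(2)}$ invertible. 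The key structural fact I would use repeatedly is the ``orthogonality from the left'': $\int V_n(x)\,dW(x)\,R(x) = 0$ for every matrix polynomial $R$ with $\deg R < n$, and dually $\int P(x)\,dW(x)\,G_n(x) = 0$ for every $P$ with $\deg P < n$.

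Next I would produce \eqref{5trr_Vn}. The polynomial $x\,V_n(x)$ has degree $n+1$, so it can be expanded in the basis $V_0,\dots,V_{n+1}$ as $x\,V_n(x) = \sum_{k=0}^{n+1} D_{n,k}\,V_k(x)$ for suitable matrix coefficients $D_{n,k}$ (this expansion exists and is unique because the leading coefficients $\kappa_k^{(1)}$ are invertible, so $\{V_k\}$ is a genuine basis of the left module of matrix polynomials). To see that $D_{n,k} = 0$ for $k \le n-2$, I pair both sides on the right with $G_k$ and integrate against $dW$: the left side gives $\int x\,V_n(x)\,dW(x)\,G_k(x) = \int V_n(x)\,dW(x)\,\big(x\,G_k(x)\big)$, and $x\,G_k(x)$ has degree $k+1 \le n-1 < n$, so this vanishes by left-orthogonality of $V_n$; the right side gives $D_{n,k}$, using $\int V_j\,dW\,G_k = I_N\delta_{j,k}$. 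Hence $D_{n,k}=0$ for $k\le n-2$, which gives \eqref{5trr_Vn} after renaming $A_n := D_{n,n+1}$, $B_n := D_{n,n}$, $C_n := D_{n,n-1}$. The recurrence \eqref{5trr_Gn} is obtained symmetrically, expanding $x\,G_n(x) = \sum_{k} G_k(x)\,\widetilde D_{n,k}$ and pairing on the left with $V_k$; I would then check that the coefficients produced this way coincide with $A_{n-1},B_n,C_{n+1}$ by pairing $\int V_k(x)\,dW(x)\,\big(x\,G_n(x)\big)$ against $\int \big(x\,V_k(x)\big)\,dW(x)\,G_n(x)$ and comparing with the already-known expansion of $x\,V_k$, i.e.\ using the identity $\int V_k\,dW\,(xG_n) = \int (xV_k)\,dW\,G_n$ together with \eqref{5trr_Vn}. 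The initial conditions $V_{-1}=G_{-1}=\mathbf 0_N$, $V_0=G_0=I_N$ are just the chosen normalization $U_0 = I_N$ together with the convention for index $-1$.

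Finally, nonsingularity of $A_n$ and $C_n$: comparing leading coefficients of degree $n+1$ on both sides of \eqref{5trr_Vn} gives $\kappa_n^{(1)} = A_n\,\kappa_{n+1}^{(1)}$, so $A_n = \kappa_n^{(1)}\big(\kappa_{n+1}^{(1)}\big)^{-1}$ is invertible. For $C_n$, I pair \eqref{5trr_Vn} on the right with $G_{n-1}$: the left side is $\int x V_n\,dW\,G_{n-1} = \int V_n\,dW\,(xG_{n-1})$, and by \eqref{5trr_Gn} $x G_{n-1} = G_{n-2}A_{n-2} + G_{n-1}B_{n-1} + G_n C_n$, so this equals $\int V_n\,dW\,G_n\,C_n = C_n$; the right side is $\int C_n V_{n-1}\,dW\,G_{n-1} = C_n \cdot \big(\int V_{n-1}\,dW\,G_{n-1}\big)$, which is consistent, so instead I use the dual route: pair on the \emph{left} with $V_{n-1}$ after writing $C_n$ in terms of the $\Omega$-normalizations, namely $C_n = \big(\int V_n\,dW\,x^{n-1}\big)\big(\int V_{n-1}\,dW\,x^{n-1}\big)^{-1}$ up to leading coefficients, which is invertible because $\int V_n\,dW\,x^{n-1} = \Omega_n^{(1)}(\kappa_{n-1}^{(1)})^{-1}\cdot(\text{invertible})$ — more cleanly, from \eqref{5trr_Gn} the degree-$n$ leading coefficient comparison gives $\kappa_{n-1}^{(2)} = C_n\,\kappa_n^{(2)}$ after suitable bookkeeping, whence $C_n = \kappa_{n-1}^{(2)}(\kappa_n^{(2)})^{-1}$ is invertible. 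The main obstacle is precisely this last bit of bookkeeping: one must be careful that left- and right-orthogonal sequences carry their leading coefficients on opposite sides, so matching leading coefficients in \eqref{5trr_Vn} versus \eqref{5trr_Gn} requires tracking whether a coefficient multiplies from the left or the right, and showing the two recurrences share the \emph{same} coefficient matrices $A_n, B_n, C_n$ (rather than transposes or conjugates) rests on the compatibility identity $\int V_m\,dW\,(xG_n) = \int (xV_m)\,dW\,G_n$ used at the right indices. Everything else is the standard Favard-type computation adapted to the module setting.
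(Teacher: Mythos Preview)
The paper does not actually prove this theorem: it is quoted verbatim from \cite{BMF2}, Theorem~4, and no argument is reproduced here. So there is no ``paper's own proof'' to compare against; your proposal has to be judged on its merits.

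Your outline is the standard Favard-type argument and is essentially correct. The expansion of $xV_n$ in the basis $\{V_k\}$, the use of $\int V_n\,dW\,(xG_k)=\int (xV_n)\,dW\,G_k$ to kill the coefficients with $k\le n-2$, and the same identity at indices $(k,n)$ to match the coefficients in \eqref{5trr_Gn} with those already named $A_{n-1},B_n,C_{n+1}$ in \eqref{5trr_Vn} are exactly right. The invertibility of $A_n$ via $\kappa_n^{(1)}=A_n\kappa_{n+1}^{(1)}$ is also clean.

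Two small bookkeeping slips to fix, both of the left/right kind you yourself flag. First, the argument you give for invertibility of the leading coefficient actually yields $I_N=\Omega_n^{(1)}\kappa_n^{(2)}$, hence $\kappa_n^{(2)}$ invertible; to get $\kappa_n^{(1)}$ invertible you should run the dual computation $I_N=\kappa_n^{(1)}\Omega_n^{(2)}$ using the right-orthogonality $\int x^m\,dW\,G_n=\Omega_n^{(2)}\delta_{n,m}$. Second, comparing degree-$(n{+}1)$ coefficients in \eqref{5trr_Gn} with $G_n(x)=x^n\kappa_n^{(2)}+\cdots$ gives $\kappa_n^{(2)}=\kappa_{n+1}^{(2)}C_{n+1}$, so $C_{n+1}=(\kappa_{n+1}^{(2)})^{-1}\kappa_n^{(2)}$, not $\kappa_{n-1}^{(2)}(\kappa_n^{(2)})^{-1}$ as you wrote; the coefficient sits on the right because in \eqref{5trr_Gn} the matrices multiply $G_{n+1}$ from the right. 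With these two corrections your argument goes through cleanly, and your detour through $\int V_n\,dW\,x^{n-1}$ for $C_n$ is unnecessary.
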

Without loss of generality we can suppose that $ (A_n )_{n \in \mathbb N} $ (respectively, $(C_{n})_{n \in \mathbb N}$) is a sequence of lower (respectively, upper) triangular matrices (cf. \cite{DW1}).

In the same way as in the scalar case, the Favard's Theorem for matrix polynomials can be find in the literature (cf. \cite{BMF2}, Theorem 7).
%\begin{teo} [cf. \cite{BMF2}, Theorem 7]
%Let
%$
%(A_n)_{n\in \mathbb N}
%$,
%$
%(B_{n})_{n\in \mathbb N}
%$,
%$(C_n)_{n\in \mathbb N}
%$,
%be matrix sequences such that for every $n\in \mathbb N$, $ A_n$ is  a nonsingular lower triangular matrix and $C_{n}$ is a nonsingular upper triangular one, and  let
%$
%\{ V_n \}_{n\in \mathbb N}
%$,
%$\{ G_n \}_{n\in \mathbb N}
%$
%be two matrix polynomial sequences defined~by~\eqref{5trr_Vn}, and \eqref{5trr_Gn}
%%\begin{gather*}
%%x V_n(x)=A_n V_{n+1}(x)+B_n V_n(x)+C_n V_{n-1}(x) \, , \ \  n \in \mathbb N \, ,\\
%%x G_n(x) = G_{n-1}(x) \, A_{n-1}+ G_n(x) \, B_n+ G_{n+1}(x) \, C_{n+1} \, , \ \ n \in \mathbb N \, ,
%%\end{gather*}
%with $V_{-1}(x)=G_{-1}(x)=\pmb 0_{N%\times N
%}$ and $V_{0}(x)=G_{0}(x)=I_{N%\times N
%}$.  Then there exist a unique quasidefinite matrix of measures $W$ such that
%$\{ V_n \}_{n\in \mathbb N}
%$,~$\{ G_n \}_{n\in \mathbb N}
%$
%are the families of matrix biorthogonal polynomials with respect to $W$.
%\end{teo}
%The following result establish the relation between the zeros of the  biorthogonal  matrix polynomials $V_n(x)$ and $G_n(x)$.
\begin{teo}\label{zeros}
Given a quasidefinite matrix of measures~$W$, then its biorthogonal polynomial sequences,
$ \{ V_n \}_{n\in \mathbb N}
$,~$ \{ G_n \}_{n\in \mathbb N}
$,
are such that
%The biorthogonal matrix polynomials
for each $n$, $V_n $ and $G_n $ have the same zeros.
\end{teo}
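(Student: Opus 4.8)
The plan is to identify the zeros of both $\det V_n$ and $\det G_n$ with the spectrum of the truncated block Jacobi matrix built from the recurrence coefficients, and in fact to show the two determinants agree up to a nonzero constant. Recall first that $x_0$ is a zero of the matrix polynomial $V_n$ (resp.\ $G_n$) if and only if $\det V_n(x_0)=0$ (resp.\ $\det G_n(x_0)=0$), equivalently if and only if there is a nonzero column vector $u$ with $V_n(x_0)u=\pmb 0$ (resp.\ a nonzero row vector $v$ with $vG_n(x_0)=\pmb 0$); the left and right versions coincide since $\det P=\det P^{\mathsf T}$.

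Fix $n\in\mathbb N$ and introduce the $nN\times nN$ block tridiagonal matrix
\begin{gather*}
\mathcal J_n=\begin{pmatrix} B_0 & A_0 & & \\ C_1 & B_1 & \ddots & \\ & \ddots & \ddots & A_{n-2}\\ & & C_{n-1} & B_{n-1}\end{pmatrix}.
\end{gather*}
The key point is that, although the left recurrence~\eqref{5trr_Vn} for $\{V_n\}$ and the right recurrence~\eqref{5trr_Gn} for $\{G_n\}$ look unrelated, when truncated they are governed by one and the same matrix $\mathcal J_n$: writing $\mathbf V(x)=(V_0(x),\dots,V_{n-1}(x))^{\mathsf T}$ and $\mathbf G(x)=(G_0(x),\dots,G_{n-1}(x))$ as a block column and a block row of $N\times N$ matrices, relation~\eqref{5trr_Vn} reads $(xI_{nN}-\mathcal J_n)\mathbf V(x)=\mathbf e_{n-1}\,A_{n-1}V_n(x)$ and relation~\eqref{5trr_Gn} reads $\mathbf G(x)(xI_{nN}-\mathcal J_n)=G_n(x)\,C_n\,\mathbf e_{n-1}^{\mathsf T}$, where $\mathbf e_{n-1}$ is the block coordinate vector with $I_N$ in the last slot and $\pmb 0_N$ elsewhere.

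Then I would run the classical null-vector argument in both directions. If $V_n(x_0)u=\pmb 0$ with $u\neq 0$, multiplying the first identity on the right by $u$ produces a nonzero element $\mathbf V(x_0)u$ (its first block is $V_0(x_0)u=u$) in the kernel of $x_0I_{nN}-\mathcal J_n$, so $x_0$ is an eigenvalue of $\mathcal J_n$. Conversely, if $(x_0I_{nN}-\mathcal J_n)\mathbf w=\pmb 0$ with $\mathbf w=(w_0,\dots,w_{n-1})^{\mathsf T}\neq 0$, its first $n-1$ block rows are exactly the recurrence~\eqref{5trr_Vn}, so --- using that $A_0,\dots,A_{n-1}$ are nonsingular (Theorem~\ref{t:BMF}) --- one gets $w_j=V_j(x_0)w_0$ for all $j$, hence $w_0\neq 0$, while the last block row forces $A_{n-1}V_n(x_0)w_0=\pmb 0$, i.e.\ $\det V_n(x_0)=0$. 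The symmetric argument applied to $\mathbf G(x)(xI_{nN}-\mathcal J_n)=G_n(x)C_n\mathbf e_{n-1}^{\mathsf T}$, now using that $C_1,\dots,C_n$ are nonsingular, shows that the zeros of $\det G_n$ are likewise exactly the eigenvalues of $\mathcal J_n$, which proves the theorem. If one wants the stronger statement with multiplicities, evaluating $\det(xI_{nN}-\mathcal J_n)$ by iterated Schur complements along the block structure gives $\det(xI_{nN}-\mathcal J_n)=\det(A_0\cdots A_{n-1})\det V_n(x)=\det(C_1\cdots C_n)\det G_n(x)$, whence $\det V_n(x)=c_n\det G_n(x)$ with $c_n=\det(C_1\cdots C_n)/\det(A_0\cdots A_{n-1})\neq 0$.

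The only genuinely non-routine step is the observation in the second paragraph, that the left recurrence for $\{V_n\}$ and the right recurrence for $\{G_n\}$ are governed by the same block Jacobi matrix $\mathcal J_n$, together with carefully distinguishing left from right kernels (equivalently, keeping track of transposes) throughout; once that is in place, the statement reduces to the matrix analogue of the classical fact that the zeros of polynomials defined by a three-term recurrence are the eigenvalues of the truncated Jacobi matrix.
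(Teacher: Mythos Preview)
Your proof is correct and follows essentially the same approach as the paper: both identify the zeros of $V_n$ and of $G_n$ with the eigenvalues of the same truncated block Jacobi matrix $\mathcal J_n$ (the paper phrases the $G_n$ side via the transpose $\mathcal J_n^{\mathsf T}$, which is exactly your left--kernel formulation). Your write-up is more detailed than the paper's sketch and additionally supplies the determinant identity yielding equality of multiplicities, which the paper does not address.
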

\begin{proof}
Notice that the $N$-block Jacobi~matrix associated with  the recurrence relation~\eqref{5trr_Gn} for the polynomials $G_n $ is the transpose of the $N$-block Jacobi~matrix
\begin{gather*}
J=\left(\begin{matrix}
B_0 & A_0 & \pmb 0_{N%\times N
} &   \\
C_1 & B_1 & A_1 & \ddots \\
\pmb 0_{N%\times N
}& C_2&B_2 & \ddots \\
& \ddots & \ddots & \ddots
\end{matrix}\right) \, ,
\end{gather*}
associated with the recurrence relation~\eqref{5trr_Vn} for the polynomials $V_n \, $.
The result follows by taking into account that the zeros of $V_n $ ($G_n $, respectively) are the eigenvalues of $J_n$ ($J_n^{\mathsf T}$, respectively), where $J_n$ is the truncated matrix of $J$, with dimension $nN \times nN$.
\end{proof}
\begin{defi}
Let $W$ be a quasidefinite matrix of measures and
$\{ V_n \}_{n\in \mathbb N}
$
and
$\{ G_n \}_{n\in \mathbb N}
$
the corresponding sequences of biorthogonal matrix polynomials.
We define the first kind associated polynomial sequence
$ \{ V^{(1)}_n \}_{n\in \mathbb N}
$ and
$\{ G^{(1)}_n \}_{n\in \mathbb N}
$,
as follows,
\begin{gather*}
V^{(1)}_{n-1}(x)=\int\cfrac{V_{n}(x)-V_{n}(y)}{x-y} \, dW(y) \, , \ G^{(1)}_{n-1}(x)=\int dW(y) \, \cfrac{G_{n}(x)-G_{n}(y)}{x-y} .
\end{gather*}
\end{defi}
The first kind associated polynomial sequences
$\{ V^{(1)}_n \}_{n\in \mathbb N}
$,
$\{G^{(1)}_n \}_{n\in \mathbb N}
$
also satisfy the three-term recurrence relations~\eqref{5trr_Vn} and~\eqref{5trr_Gn} %but 
with initial conditions,
$V_{-1}^{(1)}(x) = \pmb 0_{N%\times N
}$,
$V_{0}^{(1)}(x)=A_0^{-1}$, and 
$G_{-1}^{(1)}(x)=\pmb 0_{N%\times N
}$, $G_{0}^{(1)}(x)=C_1^{-1} \, $.

In order to obtain the outer ratio asymptotic for polynomials with varying recurrence coefficients we will need some auxiliary results such as quadrature %formula 
and Liouville-Ostrogradsky type formulas for biorthogonal polynomials.  For the left-orthogonal  polynomials these results can be found in the literature.

\begin{lema}[cf. \cite{D}, Lemma 2.2]
Let $A $ be an $N\times N$ matrix polynomials and let~${\bf a}$ be a zero of $A$ of multiplicity~p, i.e.~${\bf a}$ is a zero of $\det A $ of multiplicity~$p$. We~put
\ $L({\bf a} \, , \, A)= \{v \in \mathbb C^N : \overline{v}^{\mathsf T} \, A \, ({\bf a}) =\pmb 0_{N%\times N
}\} $ and  $ R({\bf a} \, , \, A)=\{v \in \mathbb C^N : A \, ({\bf a}) \, v=\pmb 0_{N%\times N
}\} \, $.
If \ $\operatorname{dim} \, %(
L({\bf a} \, , \, A) %)
 =\operatorname{dim} \, %\big(
R({\bf a} \, , \, A) %\big)
=p$, \ then \ \ $\big( \operatorname{Adj} \, (A(x))\big)^{(j)}({\bf a} )=\pmb 0_{N%\times N
}$, for $j=0,\ldots, p-2$, \
and \ \
$ \big( \operatorname{Adj} \, (A(x))\big)^{(p-1)}({\bf a} )\neq \pmb 0_{N%\times N
}$. \\
Moreover, $\operatorname{rank} \, \big(\operatorname{Adj} \, (A(x))\big)^{(p-1)}({\bf a})=p \, $.
\end{lema}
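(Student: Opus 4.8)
The plan is to reduce the statement to the behaviour at~${\bf a}$ of a local normal form of the matrix polynomial~$A$. The starting point is the cofactor identity
\[
A(x)\,\operatorname{Adj}(A(x))=\operatorname{Adj}(A(x))\,A(x)=\det(A(x))\,I_N \, ,
\]
together with the factorization $\det A(x)=(x-{\bf a})^p\,g(x)$, where $g({\bf a})\neq0$, which is available precisely because~${\bf a}$ is a zero of $\det A$ of multiplicity~$p$. Since the entries of $\operatorname{Adj}(A(x))$ are, up to sign, the $(N-1)\times(N-1)$ minors of $A(x)$, all three assertions are statements about the order of vanishing of $\operatorname{Adj}(A(x))$ at~${\bf a}$ and about the rank of its first nonzero Taylor coefficient; the assumption on $\operatorname{dim}L({\bf a},A)$ and $\operatorname{dim}R({\bf a},A)$ is what pins these down.

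First I would diagonalize~$A$ near~${\bf a}$ by a Smith normal form over the discrete valuation ring $\bbc[x]_{(x-{\bf a})}$ (equivalently, a holomorphic Smith form on a neighbourhood of~${\bf a}$; concretely, this amounts to a Schur-complement reduction in a basis where $A({\bf a})=\operatorname{diag}(I_{N-p},\pmb 0_p)$): $A(x)=U(x)\,D(x)\,V(x)$, where $U(x)$ and $V(x)$ are invertible at~${\bf a}$ (their determinants do not vanish there) and $D(x)=\operatorname{diag}\left((x-{\bf a})^{d_1},\ldots,(x-{\bf a})^{d_N}\right)$ with $0\le d_1\le\cdots\le d_N$. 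Comparing determinants gives $d_1+\cdots+d_N=p$. Since $A({\bf a})=U({\bf a})\,D({\bf a})\,V({\bf a})$ with $U({\bf a})$ and $V({\bf a})$ invertible, $\operatorname{rank}A({\bf a})=\operatorname{rank}D({\bf a})=\#\{i:d_i=0\}$, so both hypotheses $\operatorname{dim}R({\bf a},A)=p$ and $\operatorname{dim}L({\bf a},A)=p$ reduce to the single condition $\#\{i:d_i\ge1\}=\operatorname{corank}A({\bf a})=p$. Now $\#\{i:d_i\ge1\}\le\sum_i d_i$, with equality if and only if every nonzero $d_i$ equals~$1$; hence, after a permutation of coordinates,
\[
D(x)=\operatorname{diag}\left(\underbrace{1,\ldots,1}_{N-p},\ \underbrace{x-{\bf a},\ldots,x-{\bf a}}_{p}\right) \, .
\]
This step — extracting from the kernel-dimension hypothesis that every elementary divisor of~$A$ at~${\bf a}$ equals $1$ or $x-{\bf a}$ — is the one genuinely non-formal point, and it is exactly where the hypothesis is used: without it the first assertion already fails, as $A(x)=\operatorname{diag}\left((x-{\bf a})^2,I_{N-1}\right)$ shows.

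The rest is bookkeeping. From $\operatorname{Adj}(XY)=\operatorname{Adj}(Y)\,\operatorname{Adj}(X)$,
\[
\operatorname{Adj}(A(x))=\operatorname{Adj}(V(x))\,\operatorname{Adj}(D(x))\,\operatorname{Adj}(U(x)) \, ,
\]
and from the explicit form of~$D$ one gets $\operatorname{Adj}(D(x))=(x-{\bf a})^{p-1}E(x)$, where $E(x)$ is the diagonal matrix with $N-p$ entries equal to $x-{\bf a}$ followed by $p$ entries equal to~$1$; thus $\operatorname{rank}E({\bf a})=p$. As $\operatorname{Adj}(U({\bf a}))$ and $\operatorname{Adj}(V({\bf a}))$ are invertible, it follows that $\operatorname{Adj}(A(x))=(x-{\bf a})^{p-1}F(x)$, where $F(x)=\operatorname{Adj}(V(x))\,E(x)\,\operatorname{Adj}(U(x))$ is holomorphic near~${\bf a}$ with $\operatorname{rank}F({\bf a})=\operatorname{rank}E({\bf a})=p$; in particular $F({\bf a})\neq\pmb 0_N$. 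Taylor expanding $\operatorname{Adj}(A(x))=(x-{\bf a})^{p-1}F(x)$ at~${\bf a}$ then gives $\big(\operatorname{Adj}(A(x))\big)^{(j)}({\bf a})=\pmb 0_N$ for $j=0,\ldots,p-2$ and $\big(\operatorname{Adj}(A(x))\big)^{(p-1)}({\bf a})=(p-1)!\,F({\bf a})$, which is nonzero and of rank~$p$, as claimed. (For $p=1$ the first claim is vacuous and the argument reduces to the classical fact that the adjugate of a matrix of corank one has rank one.) I expect the main difficulty to lie in the careful justification of the local diagonalization and of the equality case of the inequality $\#\{i:d_i\ge1\}\le\sum_i d_i$, everything else being a formal manipulation of the cofactor identity.
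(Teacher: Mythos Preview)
The paper does not supply a proof of this lemma; it is quoted without proof from Dur\'an and L\'opez-Rodr\'{\i}guez~\cite{D}, Lemma~2.2. Your argument via the local Smith normal form is correct and complete: the crucial step is precisely the observation that $\sum_i d_i = p$ together with $\#\{i : d_i \ge 1\} = \operatorname{corank} A({\bf a}) = p$ forces every nonzero elementary exponent~$d_i$ to equal~$1$, after which the explicit adjugate computation $\operatorname{Adj}(D(x)) = (x-{\bf a})^{p-1}\operatorname{diag}(x-{\bf a},\ldots,x-{\bf a},1,\ldots,1)$ yields exactly the claimed order of vanishing and the rank~$p$ of the first nonzero Taylor coefficient. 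One small remark: for a square matrix the left and right null spaces always have the same dimension, so the hypothesis $\dim L({\bf a},A)=\dim R({\bf a},A)=p$ is really the single condition $\operatorname{corank} A({\bf a})=p$, as you implicitly use.
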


\begin{lema}[cf. \cite{Berg}, Proposition~5.14]
\label{decomposition}
Let $P_n %(x)
$ be a matrix polynomial of degree~$n$ with $m$ different zeros $\{x_{n,1},\ldots, x_{n,m}\}$ and with $\{ \ell_ 1,\ldots,\ell_ m \}$ %their
as corresponding multiplicities. For any matrix polynomial $R %(x)
$ of degree less than or equal to $n-1$ and $x \in \mathbb C \setminus \{x_{n,1},\ldots, x_{n,m}\}$ we have
\begin{gather*}
R(x) \, (P_n(x))^{-1}=\sum_{k=1}^m %\dfrac
\frac{C_{n,k}}{x-x_{n,k}} \, , \ \ \
(P_n(x))^{-1}R(x) =\sum_{k=1}^m %\dfrac
\frac{D_{n,k}}{x-x_{n,k}} \, ,
\end{gather*}
where
\begin{gather*}
C_{n,k} = %\dfrac
\frac{\ell_ k}{(\det P_n)^{(\ell_ k)}(x_{n,k})}R(x_{n,k}) \,\big( \operatorname{Adj} \,  P_n(x) \big)^{(\ell_ k-1)}(x_{n,k}) \, , \\
D_{n,k}  = %\dfrac
\frac{\ell_ k}{(\det P_n)^{(\ell_ k)}(x_{n,k})} \big( \operatorname{Adj} \,  P_n(x) \big)^{(\ell_ k-1)}(x_{n,k}) \, R(x_{n,k}) \, .
\end{gather*}
\end{lema}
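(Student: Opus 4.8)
The plan is to reduce this matrix identity to a scalar partial-fraction decomposition performed entrywise, using the classical adjugate formula $(P_n(x))^{-1}=\operatorname{Adj}(P_n(x))/\det(P_n(x))$ together with the vanishing statement for the adjugate proved in the preceding lemma (\cite{D}, Lemma~2.2).

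First I would write $R(x)\,(P_n(x))^{-1}=R(x)\operatorname{Adj}(P_n(x))/\det(P_n(x))$. Since $P_n$ has degree $n$ and $\deg R\le n-1$, the numerator $R(x)\operatorname{Adj}(P_n(x))$ has degree strictly smaller than $\deg\det(P_n)=\sum_{k=1}^{m}\ell_k$, so this rational matrix function is proper and vanishes at infinity; its only possible poles are the zeros $x_{n,1},\ldots,x_{n,m}$ of $\det P_n$, a priori of order at most $\ell_k$ at $x_{n,k}$. Hence its partial-fraction expansion has no polynomial part and is of the form $\sum_{k=1}^{m}\sum_{j=1}^{\ell_k} C_{n,k}^{(j)}/(x-x_{n,k})^{j}$.

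Second, I would invoke the preceding lemma: at each $x_{n,k}$ one has $\big(\operatorname{Adj}(P_n(x))\big)^{(j)}(x_{n,k})=\pmb 0_{N}$ for $j=0,\ldots,\ell_k-2$ and $\big(\operatorname{Adj}(P_n(x))\big)^{(\ell_k-1)}(x_{n,k})\neq\pmb 0_{N}$. Consequently $R(x)\operatorname{Adj}(P_n(x))$ vanishes to order $\ell_k-1$ at $x_{n,k}$, while $\det P_n$ vanishes there to order $\ell_k$, so every pole is in fact simple and the expansion collapses to $\sum_{k=1}^{m}C_{n,k}/(x-x_{n,k})$ with $C_{n,k}=\lim_{x\to x_{n,k}}(x-x_{n,k})\,R(x)(P_n(x))^{-1}$. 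Writing $a:=x_{n,k}$, $\ell:=\ell_k$ and Taylor expanding, $R(x)\operatorname{Adj}(P_n(x))=\tfrac{1}{(\ell-1)!}\big(R\operatorname{Adj}(P_n)\big)^{(\ell-1)}(a)\,(x-a)^{\ell-1}+O((x-a)^{\ell})$ and $\det P_n(x)=\tfrac{1}{\ell!}(\det P_n)^{(\ell)}(a)\,(x-a)^{\ell}+O((x-a)^{\ell+1})$, so the limit equals $\ell\,\big(R\operatorname{Adj}(P_n)\big)^{(\ell-1)}(a)/(\det P_n)^{(\ell)}(a)$. Expanding $\big(R\operatorname{Adj}(P_n)\big)^{(\ell-1)}(a)$ by the Leibniz rule and discarding every term that carries a derivative of $\operatorname{Adj}(P_n)$ of order $\le\ell-2$ (all of which vanish by the step above) leaves only $R(a)\big(\operatorname{Adj}(P_n)\big)^{(\ell-1)}(a)$, which is exactly the stated $C_{n,k}$. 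The identity for $(P_n(x))^{-1}R(x)$ and the formula for $D_{n,k}$ follow verbatim with $\operatorname{Adj}(P_n(x))R(x)$ in place of $R(x)\operatorname{Adj}(P_n(x))$.

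The only genuinely non-formal point, and the step I expect to be the main obstacle, is the passage from ``pole of order at most $\ell_k$'' to ``simple pole'', i.e. the fact that the adjugate vanishes to order \emph{exactly} $\ell_k-1$ at each zero. This rests entirely on the preceding lemma, whose hypothesis is that the left and right kernels of $P_n(x_{n,k})$ both have dimension equal to the multiplicity $\ell_k$; one should make sure this condition holds for the matrix polynomials to which Lemma~\ref{decomposition} will be applied. Everything else is bookkeeping with Taylor expansions and the Leibniz rule and needs no further ideas.
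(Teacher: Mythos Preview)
The paper does not supply its own proof of this lemma; it is quoted verbatim from \cite{Berg}, Proposition~5.14, so there is nothing to compare against in the paper itself. Your argument is the natural one and is essentially how such results are proved in the cited sources: write $(P_n(x))^{-1}=\operatorname{Adj}(P_n(x))/\det P_n(x)$, observe that the resulting rational matrix is proper, use the vanishing of the adjugate and its derivatives up to order $\ell_k-2$ (the preceding lemma) to force simple poles, and read off the residues via Taylor expansion and the Leibniz rule. This is correct and complete as a proof.

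Two small remarks. First, your degree count ``$\deg\bigl(R\operatorname{Adj}(P_n)\bigr)<\deg\det P_n$'' tacitly assumes the leading coefficient of $P_n$ is nonsingular, so that $\det P_n$ has full degree $nN$; this is satisfied in every application in the paper (the orthogonal polynomials have nonsingular leading coefficients by construction), but it is worth stating. Second, your closing caveat is exactly right: the reduction to simple poles rests on the hypothesis $\dim L(x_{n,k},P_n)=\dim R(x_{n,k},P_n)=\ell_k$ of the preceding lemma, which is an implicit standing assumption throughout the paper (it holds, in particular, for the biorthogonal polynomials~$V_n$, $G_n$ and their scaled versions because their zeros are eigenvalues of the truncated Jacobi matrices $J_n$ and~$J_n^{\mathsf T}$).
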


%Let
%$\{ V_n \}_{n\in \mathbb N}
%$
%be the matrix left-orthogonal polynomial sequence with respect to the matrix of measures~$W$ and let
%$\{ V_n^{(1)} \}%_{n\in\mathbb{N}}
%$
%be the associated polynomial sequence of the first kind for
%$\{ V_n \}%_{n\in\mathbb{N}}
%$.
%Let $x_{n,k}$, $k=1,\ldots,s$, be the different zeros of the matrix polynomial $V_n(x)$, hence it is clear that  $s\leq nN $, and let $\Gamma_{n,k}$ be the matrices
%%
%\begin{gather*}%\label{t1}
%\Gamma_{n,k} =  \frac{\ell_ k}{(\det V_n)^{(\ell_ k)}(x_{n,k})} (\operatorname{Adj} \, V_n(x))^{(\ell_ k-1)}(x_{n,k})V_{n-1}^{(1)} (x_{n,k}) \, ,
%\end{gather*}
%for $k=1,\ldots,s $, where $\ell_ k$ is the multiplicity of zero $x_{n,k}$.
%Then, for any polynomial~$V \, $ of degree less than or equal to $2n-1$ the following quadrature formula~holds
%\begin{gather*}
%\int V(x) \, dW(x)  =  \sum_{k=1}^{s} V(x_{n,k}) \, \Gamma_{n,k} \, .
%\end{gather*}

\begin{teo}[Quadrature formula]
\label{quadrature_formula_right}
Let $\{ V_n \}_{n\in \mathbb N}
$,  $\{ G_n \}_{n\in\mathbb{N}}
$
be the sequences of biorthogonal matrix polynomials with
respect to a quasidefinite matrix of measures~$W$, and let
$\{ V_n^{(1)} \}_{n\in\mathbb{N}}
$,
$\{ G_n^{(1)}\}_{n\in\mathbb{N}}
$
be its first kind associated polynomial sequences.
Given the different zeros of $V_n$, $\{x_{n,1},\ldots, x_{n,s}\} $, with %their
 %respective
multiplicities $\{\ell_ 1, \ldots, \ell_ s\}$, % and the matrices
we define the matrices
$\Gamma_{m,k}$,
$\widetilde \Gamma_{m,k}$, %defined~
as
\begin{gather*}%\label{t1}
\Gamma_{n,k} =  \frac{\ell_ k}{(\det V_n)^{(\ell_ k)}(x_{n,k})} \big( \operatorname{Adj} \, V_n(x) \big)^{(\ell_ k-1)}(x_{n,k})V_{n-1}^{(1)} (x_{n,k}) \, , %\\
\end{gather*}
\begin{gather*}
\widetilde \Gamma_{n,k} = G_{n-1}^{(1)} (x_{n,k}) \frac{\ell_ k}{(\det G_n)^{(\ell_ k)}(x_{n,k})} \big(\operatorname{Adj} \, G_n(x)\big)^{(\ell_ k-1)}(x_{n,k}) \, .
\end{gather*}
Then, for any polynomial $P $ of degree less than or equal to $2n-1$ the following quadrature formula holds
\begin{gather*}
\int P(x) \, dW(x)  =  \sum_{k=1}^{s} P (x_{n,k}) \, \Gamma_{n,k} \, ,
 \ \ \
\int dW(x) \, P(x) =  \sum_{k=1}^{s} \widetilde \Gamma_{n,k} \, P(x_{n,k}) \, .
\end{gather*}
\end{teo}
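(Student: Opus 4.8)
The plan is to mimic the classical scalar Gauss-quadrature argument, using the recurrence relations~\eqref{5trr_Vn}, \eqref{5trr_Gn} to write a generic polynomial in a convenient form and the partial-fraction decomposition of Lemma~\ref{decomposition} to evaluate the resulting integrals. I will carry out the details for the first identity $\int P(x)\,dW(x)=\sum_{k=1}^{s}P(x_{n,k})\,\Gamma_{n,k}$; the second one is entirely analogous, acting on the right with $G_n$ in place of $V_n$ and using $G^{(1)}_{n-1}$.

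First I would reduce to the case where the integrand is written as $P(x)=V_n(x)\,Q(x)+S(x)$, with $\deg Q\le n-1$ and $\deg S\le n-1$, by matrix polynomial division on the left by $V_n$ (legitimate since the leading coefficient of $V_n$ is nonsingular, being a product of the nonsingular $A_j$'s up to normalization). Because $\deg P\le 2n-1$ and the division lowers degree by $n$, this is a genuine decomposition. The key orthogonality input is that $\int V_n(x)\,dW(x)\,Q(x)=\pmb 0_N$ for every $Q$ of degree at most $n-1$; this follows from the right-orthogonality characterization in Definition~\ref{defi1} (write $Q$ in the basis $\{x^m\}_{m=0}^{n-1}$ and use $\int V_n(x)\,dW(x)\,x^m=\Omega^{(1)}_n\delta_{n,m}$). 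Hence $\int P\,dW=\int S\,dW$, and it remains to show the quadrature reproduces $\int S\,dW$ exactly for $\deg S\le n-1$.

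Next I would identify $\int S(x)\,dW(x)$ with a value built out of the first-kind polynomial $V^{(1)}_{n-1}$. By the definition of $V^{(1)}_{n-1}$ we have $V^{(1)}_{n-1}(x)=\int \frac{V_n(x)-V_n(y)}{x-y}\,dW(y)$; evaluating a suitable combination at the zeros $x_{n,k}$ of $V_n$ kills the $V_n(x)$-term and leaves $-\int \frac{V_n(y)}{x-y}\,dW(y)$ type expressions. The cleanest route is to apply Lemma~\ref{decomposition} to $R(x):=S(x)\cdot(\text{something of degree}\le n-1)$ divided by $V_n(x)$: writing $S(x)(V_n(x))^{-1}=\sum_{k}\frac{C_{n,k}}{x-x_{n,k}}$ with $C_{n,k}=\frac{\ell_k}{(\det V_n)^{(\ell_k)}(x_{n,k})}S(x_{n,k})\big(\operatorname{Adj}V_n(x)\big)^{(\ell_k-1)}(x_{n,k})$, and then integrating $V^{(1)}_{n-1}$ against $dW$ in the form dictated by its definition, I recover $\int S\,dW=\sum_k S(x_{n,k})\,\Gamma_{n,k}$ once the factor $V^{(1)}_{n-1}(x_{n,k})$ in $\Gamma_{n,k}$ is matched against $\int \frac{dW(y)}{x_{n,k}-y}$ evaluated through the same residue formula. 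Concretely, the identity $\big(\operatorname{Adj}V_n\big)^{(\ell_k-1)}(x_{n,k})\,V^{(1)}_{n-1}(x_{n,k})$ is exactly what emerges when one multiplies the partial-fraction identity for $(V_n(x))^{-1}$ on the right by $V^{(1)}_{n-1}$ and uses the resolvent-type relation between $V_n$, $V^{(1)}_{n-1}$, and $\int\frac{dW(y)}{x-y}$ that follows from the recurrence.

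The main obstacle will be the bookkeeping around the non-commutativity: since $W$ is only quasidefinite (not Hermitian) and the matrices $V_n(x_{n,k})$, $\operatorname{Adj}V_n(x_{n,k})$, $V^{(1)}_{n-1}(x_{n,k})$ need not commute, I must be careful to keep every product in the order in which it appears, and in particular to place $\Gamma_{n,k}$ \emph{to the right} of $P(x_{n,k})$ throughout. A secondary subtlety is the passage from simple zeros to zeros of higher multiplicity $\ell_k$: there Lemma~\ref{decomposition} already supplies the correct residue with the $(\ell_k-1)$-th derivative of the adjugate, so the only thing to verify is that the division step and the definition of $V^{(1)}_{n-1}$ interact with these higher-order residues consistently — which amounts to differentiating the scalar identity $(\det V_n)(x)(V_n(x))^{-1}=\operatorname{Adj}V_n(x)$ the appropriate number of times and matching coefficients. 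Once these two points are handled, the computation is routine, and the right-orthogonal version follows by transposing the roles of $V_n,V^{(1)}_{n-1}$ and $G_n,G^{(1)}_{n-1}$ and using \eqref{5trr_Gn} in place of \eqref{5trr_Vn}.
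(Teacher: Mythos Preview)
Your overall strategy --- division, partial fractions from Lemma~\ref{decomposition}, and recognition of the first-kind polynomial --- is exactly the paper's. But there is a genuine noncommutativity gap in the first step.

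You divide on the left, $P=V_nQ+S$, and then invoke $\int V_n(x)\,dW(x)\,Q(x)=\pmb 0_N$. That identity is true, but it is not the integral that appears: from $P=V_nQ+S$ the term to kill is $\int V_n(x)\,Q(x)\,dW(x)$, with the matrix polynomial $Q$ trapped between $V_n$ and $dW$. Left orthogonality only says $\int V_n(x)\,dW(x)\,x^m=\pmb 0_N$; since the matrix coefficients of $Q$ do not commute with $dW$, you cannot slide $Q$ across, and $\int V_n\,Q\,dW$ need not vanish. The correct division is on the other side, $P=Q\,V_n+S$: then the coefficients $q_j$ of $Q$ sit on the far left and
\[
\int Q(x)\,V_n(x)\,dW(x)=\sum_j q_j\int x^jV_n(x)\,dW(x)=\sum_j q_j\int V_n(x)\,dW(x)\,x^j=\pmb 0_N\,.
\]
(The paper proves only the $G$-formula and cites~\cite{BMF1} for the $V$-case; there the division is $P=G_nC+R$, with $G_n$ on the left so that the coefficients of $C$ land on the far right of $\int dW\,G_n$.)

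Your middle step is also not quite the right mechanism. No ``resolvent-type relation'' is needed --- and the one you allude to is an asymptotic statement, not an exact identity, so it could not yield an exact quadrature. After writing $S(x)V_n(x)^{-1}=\sum_k\frac{C_{n,k}}{x-x_{n,k}}$, multiply back by $V_n(x)$ on the right and use the adjugate annihilation $(\operatorname{Adj}V_n)^{(\ell_k-1)}(x_{n,k})\,V_n(x_{n,k})=\pmb 0_N$ (the $V$-analogue of~\eqref{anulacion}) to rewrite
\[
S(x)=\sum_{k}C_{n,k}\,\frac{V_n(x)-V_n(x_{n,k})}{x-x_{n,k}}\,.
\]
Integrating against $dW$, the difference quotient is exactly the integrand in the definition of $V^{(1)}_{n-1}$, so $\int S\,dW=\sum_k C_{n,k}V^{(1)}_{n-1}(x_{n,k})=\sum_k S(x_{n,k})\Gamma_{n,k}$. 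The same adjugate annihilation, now from the other side, lets you replace $S(x_{n,k})$ by $P(x_{n,k})$. That is the whole argument; no recurrence relation and no Markov-type identity enters.
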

\begin{proof}
We will prove the quadrature formula for the right orthogonal polynomials, because the left one is already proved in~\cite{BMF1}.
Let $P $ be a matrix polynomial of degree less than or equal to $n-1$. Since $G_n $ is a polynomial with nonsingular leading coefficient, then
%\begin{equation*}
$P(x)= G_n (x) \, C(x)+R(x) \, $.
%\end{equation*}
Here $C(x)$ and $R(x)$ are matrix polynomials with degree of $R(x)$ less than or equal to $2n-1$.
Using Lemma~\ref{decomposition} we get
\begin{gather*}
G_n^{-1}(x) \, R(x)=  \sum_{k=1}^{s} \frac{D_{n,k}}{x-x_{n,k}} \, ,
\end{gather*}
where the matrices $D_{n,k}$ are 
\begin{gather*}
D_{n,k} = \frac{\ell_ k}{(\det G_n)^{(\ell_ k)} (x_{n,k})} \, \big(\operatorname{Adj} \, G_n(x)\big)^{(\ell_ k-1)} (x_{n,k}) \, R(x_{n,k}) \, .
\end{gather*}
Taking into account that $R(x_{n,k})=P(x_{n,k})-G_n(x_{n,k}) \, C(x_{n,k})$ and
\begin{gather}\label{anulacion}
\big(\operatorname{Adj} \, G_n \big)^{(\ell_ k-1)} (x_{n,k}) \, G_n(x_{n,k})=G_n(x_{n,k}) \, \big(\operatorname{Adj} \,  G_n \big)^{(\ell_ k-1)} (x_{n,k}) =\pmb 0_{N%\times N
} \, ,
\end{gather}
the previous expression becomes
\begin{gather*}
D_{n,k} = \frac{\ell_ k}{(\det G_n)^{(\ell_ k)} (x_{n,k})}  \big(\operatorname{Adj} \,  G_n(x)\big)^{(\ell_ k-1)} (x_{n,k}) \, P(x_{n,k}) \, .
\end{gather*}
Thus,
\begin{gather*}
P(x)=G_n(x) \, C(x)+  \sum_{k=1}^{s} G_n(x) \, \frac{D_{n,k}}{x-x_{n,k}} \, .
\end{gather*}
Using again~\eqref{anulacion},  we have
\begin{gather*}
P(x)=G_n(x) \, C(x)+  \sum_{k=1}^{s} \frac{G_n(x)-G_n(x_{n,k})}{x-x_{n,k}} \, D_{n,k} \, .
\end{gather*}
and by the definition of the first kind associated polynomial, it follows~that
\begin{gather*}
\int dW(x) \, P(x)=\int dW(x) \, G_n(x) \, C(x) +  \sum_{k=1}^{s} G_{n-1}^{(1)}(x_{n,k}) \, D_{n,k} \, .
\end{gather*}
So, from orthogonality we have
\begin{gather*}
\int dW(x) \, P(x)=  \sum_{k=1}^{s} G_{m-1}^{(1)}(x_{m,k}) \, D_{m,k} \, ,
\end{gather*}
and the result follows.
\end{proof}
\begin{teo}[Liouville-Ostrogradski %type 
formula%for right polynomials
]
Let
$\{ V_n \}_{n\in\mathbb{N}}
$,
$\{ G_n \}_{n\in\mathbb{N}}
$
be the sequence of matrix  biorthogonal polynomials with respect to a quasidefinite matrix of measures~$W $ and
$ \{ V^{(1)}_n \}_{n\in\mathbb{N}}
$,
$\{ G_n^{(1)}\}_{n\in\mathbb{N}}
$
be, respectively, the first kind associated polynomial sequences. Then, 
\begin{gather}
 \label{Liouville-Ostrogradski1}
V_n(z) \, G_n^{(1)}(z)- V_{n+1}^{(1)}(z) \, G_{n+1}(z)=A_{n}^{-1} \, , \\
%\end{gather}
%\begin{gather}
\label{Liouville-Ostrogradski}
V_n(x) \, G_n^{(1)}(x)- V_{n-1}^{(1)}(x) \, G_{n+1}(x)=C_{n+1}^{-1} \, ,
\end{gather}
%where $A_n$ is the nonsingular coefficient that appears in the recurrence relation~\eqref{5trr_Gn}.
where $A_n$, $C_n$ are the nonsingular matrices %that appears in the recurrence relation
in~\eqref{5trr_Gn}.%, ~\eqref{5trr_Vn}.
\end{teo}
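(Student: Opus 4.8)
The plan is to prove both identities \eqref{Liouville-Ostrogradski1} and \eqref{Liouville-Ostrogradski} simultaneously by an inductive argument on $n$, using the three-term recurrence relations \eqref{5trr_Vn}, \eqref{5trr_Gn} together with the corresponding recurrences for the associated sequences $\{V_n^{(1)}\}$ and $\{G_n^{(1)}\}$. The key structural fact we would exploit is that the left-orthogonal polynomials $V_n$ and $V_{n-1}^{(1)}$ satisfy the same recurrence coefficients acting on the left, while the right-orthogonal polynomials $G_n$ and $G_{n+1}$ satisfy the recurrence with coefficients acting on the right; this mismatch of "sides" is exactly what makes a product like $V_n G_n^{(1)}$ telescope. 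Concretely, I would define the expression $E_n(z) = V_n(z)\, G_n^{(1)}(z) - V_{n+1}^{(1)}(z)\, G_{n+1}(z)$ and compute $E_n(z)$ in terms of $E_{n-1}(z)$ by substituting the recurrences; the goal is to show $E_n = A_n^{-1}$ for all $n$.

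First I would establish the base case. Using $V_0 = I_N$, $G_0^{(1)} = C_1^{-1}$, $V_1^{(1)}(x) = A_0^{-1}\big((x - B_0)V_0^{(1)} - \ldots\big)$ evaluated from the associated recurrence with $V_0^{(1)} = A_0^{-1}$, $V_{-1}^{(1)} = \pmb 0_N$, and $G_1(x)$ from $x G_0 = G_{-1}A_{-1} + G_0 B_0 + G_1 C_1$, i.e. $G_1(x) = (xI_N - B_0)C_1^{-1}$, one checks directly that $E_0(z) = V_0 G_0^{(1)} - V_1^{(1)} G_1 = C_1^{-1} - A_0^{-1}(zI_N - B_0)A_0^{-1}\cdot(zI_N-B_0)C_1^{-1}$ — wait, this must collapse to $A_0^{-1}$, so I would carefully track the $A_0^{-1}$ normalization in $V_1^{(1)}$; the $z$-dependent terms must cancel because $E_n$ is independent of $z$ (note the left-hand sides have degree $n$ times degree $n$, but the leading terms cancel). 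Then for the inductive step: write $z V_n = A_n V_{n+1} + B_n V_n + C_n V_{n-1}$, and similarly $z G_n^{(1)} = G_{n-1}^{(1)} A_{n-1} + G_n^{(1)} B_n + G_{n+1}^{(1)} C_{n+1}$ — and the analogous relations for $V_{n+1}^{(1)}$ and $G_{n+1}$ — and form $z E_n(z)$ two ways, once expanding the left factor's recurrence and once the right factor's; subtracting shows $E_n(z) = E_{n-1}(z)$, hence $E_n$ is constant in $n$, equal to the base value $A_0^{-1}$... but the claim is $A_n^{-1}$, so in fact the recursion must produce $A_n^{-1}$ from $A_{n-1}^{-1}$, meaning the step is $E_n = $ (something involving $A_n^{-1}$) rather than $E_n = E_{n-1}$. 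The cleanest route is therefore to multiply $E_n$ on the left by $A_n$ and on the right by $C_{n+1}$ or similar, and show the resulting quantity telescopes to the identity; I would determine the correct conjugating factors by inspecting the leading coefficients.

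For the second identity \eqref{Liouville-Ostrogradski}, I would run the identical scheme with $F_n(x) = V_n(x) G_n^{(1)}(x) - V_{n-1}^{(1)}(x) G_{n+1}(x)$, using that $V_{n-1}^{(1)}$ and $G_{n+1}$ are shifted one step in opposite directions, and again substituting the four recurrences. Alternatively — and this may be the slicker path — I would derive \eqref{Liouville-Ostrogradski} from \eqref{Liouville-Ostrogradski1} by subtracting the two and observing that $\big(V_{n+1}^{(1)}(x) - V_{n-1}^{(1)}(x)\big)G_{n+1}(x) = A_n^{-1} - C_{n+1}^{-1}$; one then checks this is consistent with the recurrence $x V_n = A_n V_{n+1} + B_n V_n + C_n V_{n-1}$ applied to the associated polynomials. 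I expect the main obstacle to be purely bookkeeping: keeping the left/right placement of the matrix coefficients straight throughout (since matrices do not commute, the sesquilinear rather than bilinear setup forces $V$'s to carry coefficients on the left and $G$'s on the right), and correctly handling the normalizations $V_0^{(1)} = A_0^{-1}$, $G_0^{(1)} = C_1^{-1}$ so that the constant that the telescoping sum produces comes out as $A_n^{-1}$ (resp. $C_{n+1}^{-1}$) rather than $A_0^{-1}$. No deep idea is needed beyond the observation that the two families live on opposite sides, which is what makes the Wronskian-type product collapse.
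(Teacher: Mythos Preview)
Your proposal has a genuine gap. The scalar Wronskian trick---compute $zE_n(z)$ two ways using the recurrences for the left and right factors and subtract---does not telescope in the matrix biorthogonal setting, because the coefficients $A_n, B_n, C_n$ act on the \emph{left} in the recurrence for $V_n$ (and $V_n^{(1)}$) but on the \emph{right} in the recurrence for $G_n$ (and $G_n^{(1)}$). Concretely, expanding $zV_nG_n^{(1)}$ via \eqref{5trr_Vn} produces a term $B_nV_nG_n^{(1)}$, while expanding it via the shifted recurrence for $G_n^{(1)}$ produces $V_nG_n^{(1)}B_{n+1}$; these do not cancel, and the analogous obstruction occurs for the $A$- and $C$-terms. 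You sense this yourself (``but the claim is $A_n^{-1}$ \ldots\ so the recursion must produce $A_n^{-1}$ from $A_{n-1}^{-1}$''), but the remedy you propose---multiplying by $A_n$ on one side and $C_{n+1}$ on the other---does not by itself tame these mixed terms, and your alternative of subtracting the two target identities just relocates the same difficulty.

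The paper's proof does proceed by induction on the recurrences, but the missing ingredient is an auxiliary vanishing identity,
\[
V_n(x)\,G_{n-1}^{(1)}(x)\;-\;V_{n-1}^{(1)}(x)\,G_n(x)\;=\;\pmb 0_N\,,
\]
which is \emph{not} obtained from the recurrences. Instead one substitutes the integral definitions $V_{n-1}^{(1)}(x)=\int\frac{V_n(x)-V_n(y)}{x-y}\,dW(y)$ and $G_{n-1}^{(1)}(x)=\int dW(y)\,\frac{G_n(x)-G_n(y)}{x-y}$, adds and subtracts $\int\frac{V_n(y)\,dW(y)\,G_n(y)}{x-y}$, and invokes left- and right-orthogonality to kill the two resulting integrals. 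This vanishing is exactly what eliminates the non-commuting cross-terms: once it is in hand, applying the $G$-recurrences to $G_n^{(1)}$ and $G_{n+1}$, and then the $V$-recurrences to $V_n$ and $V_{n-1}^{(1)}$, reduces \eqref{Liouville-Ostrogradski} at step $n$ cleanly to step $n-2$ (not $n-1$), with the conjugation $A_{n-1}^{-1}C_{n-1}(\,\cdot\,)A_{n-1}C_{n+1}^{-1}$ collapsing the induction hypothesis $C_{n-1}^{-1}$ to $C_{n+1}^{-1}$. Your purely recurrence-based plan never identifies this orthogonality step, and without it the induction does not close. (The paper does not re-prove \eqref{Liouville-Ostrogradski1}; it is cited from earlier work.)
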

\begin{proof}
Equation~\eqref{Liouville-Ostrogradski1} was already proved in~\cite{BMF1}.
To prove~\eqref{Liouville-Ostrogradski} %this result
we proceed by induction on $n$. For $n=0$ the result follows from the initial conditions.
We assume that the formula
\begin{gather*}
V_p(x) \, G_p^{(1)}(x)- V_{p-1}^{(1)}(x) \, G_{p+1}(x)=C_{p+1}^{-1} \, ,
\end{gather*}
is true for $p=1,\ldots,n-1$.
First, we use the recurrence relation in $G_n^{(1)}$ and~$G_{n+1}$ to obtain
\begin{multline*}
V_n(x) \, G_n^{(1)}(x)- V_{n-1}^{(1)}(x) \, G_{n+1}(x)=
 (V_n(x)  \, G_{n-1}^{(1)}(x)- V_{n-1}^{(1)}(x) \, G_n(x)) \\
 \times (xI_{N%\times N
}-B_n) \, C_{n+1}^{-1}
 -(V_n(x) \, G_{n-2}^{(1)}(x)- V_{n-1}^{(1)}(x) \, G_{n-1}(x) ) \, A_{n-1} \, C_{n+1}^{-1}
 \, .
\end{multline*}
Second, we prove that
\begin{gather*}
V_n(x) \, G_{n-1}^{(1)}(x)- V_{n-1}^{(1)}(x) \, G_n(x)=\pmb 0_{N%\times N
} \, .
\end{gather*} 	
Using the definition of the first kind associated polynomial, we get
\begin{multline*}
V_n(x) \, G_{n-1}^{(1)}(x)- V_{n-1}^{(1)}(x) \, G_n(x)
 \\ =\int\frac{V_n(y) \, dW(y)}{x-y} \, G_n(x) - V_n(x) \, \int\frac{dW(y) \, G_n(y)}{x-y} \, .
\end{multline*}
Adding and subtracting
%\begin{gather*}
$\displaystyle \int \frac{V_{n}(y) \, dW(y) \, G_n(y)}{x-y} \, $
%\end{gather*}
in the last relation and taking in account the left and right-orthogonalities, the result follows.
With this in~mind
\begin{multline}\label{eq10}
V_n(x) \, G_n^{(1)}(x)- V_{n-1}^{(1)}(x) \, G_{n+1}(x)
	\\
=-(V_n(x) \, G_{n-2}^{(1)}(x)- V_{n-1}^{(1)}(x) \, G_{n-1}(x) ) \, A_{n-1} \, C_{n+1}^{-1} \, .
\end{multline}
Now, using the recurrence relations for $V_n $ and $V_{n-1}^{(1)} $,
\begin{multline*}
V_n(x) \, G_{n-2}^{(1)}(x)- V_{n-1}^{(1)}(x) \, G_{n-1}(x)
 \\ = A_{n-1}^{-1}(x-B_{n-1}) \, (V_{n-1}(x) \, G_{n-2}^{(1)}(x) - V_{n-2}^{(1)}(x) \, G_{n-1}(x))
 \\ - A_{n-1}^{-1} C_{n-1} \, (V_{n-2}(x) \, G_{n-2}^{(1)}(x) - V_{n-3}^{(1)}(x) \, G_{n-1}(x) ) \, .
\end{multline*}
Since $V_{n-1}(x) \, G_{n-2}^{(1)}(x)- V_{n-2}^{(1)}(x) \, G_{n-1}(x)=\pmb 0_{p%\times p
} $, we deduce
\begin{multline*}
V_n(x) \, G_{n-2}^{(1)}(x)- V_{n-1}^{(1)}(x) \, G_{n-1}(x)
\\ = - A_{n-1}^{-1} \, C_{n-1} (V_{n-2}(x) \, G_{n-2}^{(1)}(x)- V_{n-3}^{(1)}(x) \, G_{n-1}(x) ) \, .
\end{multline*}
Using this relation in~\eqref{eq10} we obtain
\begin{multline*}
V_n(x) \, G_n^{(1)}(x)- V_{n-1}^{(1)}(x) \, G_{n+1}(x)
 \\ = A_{n-1}^{-1} \, C_{n-1} \, (V_{n-2}(x) \, G_{n-2}^{(1)}(x)- V_{n-3}^{(1)}(x) \, G_{n-1}(x) ) \, A_{n-1} \, C_{n+1}^{-1} \, .
\end{multline*}
According to the induction hypothesis the result follows.
\end{proof}
%\medskip
In the sequel, we will assume that the matrix recurrence coefficients diverge in a particular way: we will suppose that there exists a
sequence of positive definite matrices
$(D_n)_{n\in \mathbb N}
$
such that
\begin{gather}
\label{divergence_conditions}
\begin{cases}
\displaystyle \lim_{n\to \infty} D_n^{-1/2}A_n \, D_n^{-1/2} =A \, , \
 \lim_{n\to \infty} D_n^{-1/2}B_n \, D_n^{-1/2} =B \, , \\
\displaystyle \lim_{n\to \infty} D_n^{-1/2}C_n \, D_n^{-1/2} =C \, , \
 \lim_{n\to \infty} D_n^{-1/2} D_{n-1}^{1/2} =I_{N%\times N
 } \, .
\end{cases}
\end{gather}
When unbounded coefficients are considered in the scalar case (assuming the same hypothesis given by~\eqref{divergence_conditions}), the outer ratio asymptotic  is then obtained for the scaled polynomials $p_n(c_n z)$. However,  in the matrix case  there is a large range of possibilities to define the scaled matrix polynomial $P(Hx)$ (cf.~\cite{Duran2}). From now on we are going to work with two notions of scaled matrix polynomials depending on the %type 
kind of orthogonality (left or right) that we will deal with. In the case of left-orthogonality, the suitable definition of scaled matrix polynomials was introduced by %A. J.
Durán in~\cite{Duran2}.
\begin{defi}[cf. \cite{Duran2}]\label{Descalamiento}
Given the sequences of recurrence coefficients
$(A_n)_{n\in\mathbb{N}}
$,
$(B_n)_{n\in\mathbb{N}}
$,
and
$(C_n)_{n\in\mathbb{N}}
$,
we can define a sequence of matrix polynomials in an one matrix variable,~$\{ \pmb V_{n} \}_{n \in \mathbb N}$~as
\begin{gather*}
T \, \pmb V_{n} (T)= A_n \, \pmb V_{n+1}(T)+B_n \, \pmb V_{n} (T)+C_n \, \pmb V_{n-1} (T) \, ,
\end{gather*}
with initial conditions $\pmb V_{-1} (T)=\pmb 0_{N%\times N
}, \pmb V_{0} (T)=I_{N%\times N
}$. We define the \emph{left-scaled matrix polynomials} as $%V_n^{l}(H;x)
V_n^{H}(x) =\pmb V_{n} (H \, x)$.
On the other hand, the natural definition in order to scale the right-orthogonal polynomials is the following one. Using the recurrence coefficients we can define another matrix polynomial sequence of one matrix variable, $\{ \pmb G_{n} \}_{n \in \mathbb N}$, as
\begin{gather*}
{\pmb G}_n(T) \, T= {\pmb G}_{n-1}(T) \, A_{n-1}+ {\pmb G}_n(T) \, B_n+ {\pmb G}_{n+1}(T) \, C_{n+1} \, ,
\end{gather*}
with initial conditions ${\pmb G}_{-1}(x)=\pmb 0_{N%\times N
}$ and ${\pmb G}_{0}(x)=I_{N%\times N
}$. Now, we define the \emph{right-scaled matrix polynomials} as $%G_n^{r}
G_n^{H} (x)={\pmb G}_{n} (H \, x)$.
\end{defi}
Notice that, in particular, for each non-negative integer $k$ the scaled polynomial sequences
\ $\{ V_n^{D_k}(z) \}_{n \in \mathbb N} %_{n\in \mathbb N}
$ \  and \ \
$\{ G_n^{D_k}(z)\}_{n\in \mathbb N}
$, \ are biorthogonal with respect to a certain varying matrix of measures~$W_k$.  We will say that
$\{ V_n^{D_k} \}_{n \in \mathbb N} $ and $\{ G_n^{D_k} \}_{n \in \mathbb N}$ are matrix biorthogonal polynomials with varying recurrence coefficients. In this way, our main result  Theorem~\ref{main result} will be a consequence of a more general theorem on outer ratio asymptotics cf. Theorem%~\ref{auxiliary_varying_coeff_theorem_1} and Theorem
~\ref{auxiliary_varying_coeff_theorem}.

In the sequel, we associate with three given matrices $A $, $C$,~$B$, where $A,C$, are nonsingular, the left-orthogonal Chebyshev matrix polynomials of second kind
$\{ U_n^{C,B,A}\}_{n \in \mathbb N}
$
which are defined~by the recurrence formula
\begin{gather}\label{Chebyshev_recurrence_relation}
x \, U_n^{C,B,A}(x)=C \, U_{n+1}^{C,B,A}(x)+B \, U_n^{C,B,A}(x)+A \, U_{n-1}^{C,B,A}(x) \, , \ \ n \in \mathbb N \, ,
\end{gather}
with initial conditions $U_0^{C,B,A}(x)=I_{N%\times N
}$ and $U_{-1}^{C,B,A}(x)=\pmb 0_{N%\times N
}$, as well as
the right-orthogonal Chebyshev matrix polynomial of second kind
$\{ T_{m}^{A,B,C}\}_{m\in\mathbb{N}}
$
given~by
\begin{gather}\label{right_chebyshev_matrix_polynomials}
x \, T_{n}^{A,B,C}(x)=T_{n+1}^{A,B,C}(x) \, A+T_{n}^{A,B,C}(x) \, B+T_{n-1}^{A,B,C}(x) \, C \, , \ \ n \in \mathbb N \, ,
\end{gather}
where $T_0^{A,B,C}(x)=I_{N%\times N
}$ and $T_{-1}^{A,B,C}(x)=\pmb 0_{N%\times N
}$.
We denote by $W_{C,B,A}$ the matrix weight for which the polynomial sequences $\{ U_n^{C,B,A} \}$, $\{ T_{n}^{A,B,C} \}$ are biorthogonal.
%\medskip

\section{Outer ratio asymptotics for orthogonal polynomials with var\-ying recurrence coefficients}  \label{sec:3}

For each $k=1,2,\ldots $, we consider orthogonal matrix polynomials
$\{ R_{n,k}\}_{n\in \mathbb N}
$
and
$\{S_{n,k} \}_{n\in \mathbb N}
$,
%$k=1,2,\ldots $ depending on a parameter $k$,
given by the recurrence relations
\begin{gather} \label{recurrence_relation_varying_coefficients}
x \, R_{n,k}(x) =A_{n,k} \, R_{n+1,k}(x) + B_{n,k} \, R_{n,k}(x) + C_{n,k} \, R_{n-1,k}(x) \, , \ \ n \in \mathbb N \, , \\
%\end{gather}
%\begin{equation}
\label{recurrence_rel_ varying_right}
\phantom{o} \mbox{\hspace{-.35cm}} x \, S_{n,k}(x) =S_{n-1,k}(x) \, A_{n-1,k} +  S_{n,k}(x) \, B_{n,k} +  S_{n+1,k}(x) \, C_{n+1,k} \, , \ \ n \in \mathbb N \, ,
\end{gather}
with $R_{0,k}(x)=I_{N%\times N
}$, $R_{-1,k}(x)=\pmb 0_{N%\times N
}$, and $S_{0,k}(x)=I_{N%\times N
}$, $S_{-1,k}(x)=\pmb 0_{N%\times N
}$.

For a fixed~$k$, these matrix polynomial sequences are biorthogonal with respect to a certain quasidefinite matrix of measures which we denote by~$W_k $.

As far as we know, the only result on outer ratio asymptotics for matrix polynomials satisfying nonsymmetric recurrence relations is the following one.

\begin{teo}[cf. \cite{BMF1}, Theorem 3]
Let $W$ be a quasidefinite matrix of measures, and
$\{ V_n \}_{n\in \mathbb N}
$,
$\{ G_n \}_{n\in \mathbb N}
$
biorthogonal polynomial sequences with respect to $W$, satisfying,respectively, the three-term recurrence relation~\eqref{5trr_Vn},~\eqref{5trr_Gn}.
Let us assume %$\displaystyle
\begin{gather*}
\lim_{n\to\infty}A_n=A \, , \
\lim_{n\to\infty}B_n=B \, , \
\lim_{n\to\infty}C_n=C \, ,
\end{gather*}
with $ A \, $, $C \, $ nonsingular matrices.
We denote by~$\Delta_n$  the set of zeros of $\{ V_n \}_{n\in \mathbb N}
$, and $\displaystyle \Gamma=\bigcap_{N \geq 0} M_N$, where $\displaystyle M_N=\overline{\bigcup_{n\geq N}\Delta_n}$.
Then,
\begin{gather*}
\lim_{n\to \infty}V_{n-1}(x) \, V_{n}^{-1}(x) \, A_{n-1}^{-1}=\int \frac{dW_{C,B,A}(y)}{x-y} \, , \ \  x \in \mathbb C \setminus \Gamma \, , \\
%\end{gather*}
%\begin{gather*}
\lim_{n\to \infty} C_{n}^{-1} \, G_{n}^{-1} (x) \, G_{n-1} (x)=\int \frac{dW_{C,B,A}(y)}{x-y} \, , \ \  x \in \mathbb C \setminus \Gamma \, ,
\end{gather*}
where $W_{C,B,A}$ is the matrix of measures associated with the second kind Chebyshev matrix polynomials.
Moreover, the convergence is locally uniform on compact subsets~of~$\mathbb C \setminus \Gamma$.
\end{teo}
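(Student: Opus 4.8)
The plan is to reduce the statement to the study of a single matrix Riccati--type recursion satisfied by the ratios of consecutive polynomials, and then to run a fixed--point argument. Put
\begin{gather*}
\Psi_n(x) = V_{n-1}(x)\,V_n(x)^{-1}\,A_{n-1}^{-1} \, ;
\end{gather*}
this is a rational matrix function, holomorphic on $\mathbb C\setminus\Delta_n$, with $\Psi_n(x) = x^{-1}I_N + O(x^{-2})$ as $x\to\infty$. Multiplying~\eqref{5trr_Vn} on the right by $V_n(x)^{-1}$, and using $V_{n-1}V_n^{-1} = \Psi_n A_{n-1}$ together with $V_{n+1}V_n^{-1} = A_n^{-1}\Psi_{n+1}^{-1}$, one obtains
\begin{gather*}
\Psi_{n+1}(x)^{-1} = \big(x\,I_N - B_n\big) - C_n\,\Psi_n(x)\,A_{n-1} \, , \qquad n\in\mathbb N \, .
\end{gather*}
The natural candidate for the limit is the solution $\Psi(x)$ of the stationary matrix equation $\Psi^{-1} = (x\,I_N - B) - C\,\Psi\,A$ that is holomorphic near infinity with $\Psi(x)\sim x^{-1}I_N$. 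By the block Schur--complement (resolvent) description of the Markov function of a constant block Jacobi operator --- equivalently, by running the computation above for the constant recurrence $x\,U_n^{C,B,A} = C\,U_{n+1}^{C,B,A} + B\,U_n^{C,B,A} + A\,U_{n-1}^{C,B,A}$ and invoking the biorthogonality defining $W_{C,B,A}$ --- this solution is precisely $\displaystyle\int\frac{dW_{C,B,A}(y)}{x-y}$.

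The convergence $\Psi_n\to\Psi$ I would prove first for $|x|$ large. Fix $\rho$ large (depending only on $\|A\|,\|B\|,\|C\|$); for $|x|\ge\rho$ the map $F\mapsto\big[(x\,I_N - B_n) - C_n\,F\,A_{n-1}\big]^{-1}$ leaves invariant a fixed small ball around $x^{-1}I_N$ and is a contraction there, uniformly in $n$ once $n$ is large enough (using $A_n\to A$, $B_n\to B$, $C_n\to C$); hence $\{\Psi_n(x)\}$ is Cauchy, its limit solves the stationary equation and has the correct behaviour at infinity, so it equals $\Psi(x)$. To pass from $\{|x|\ge\rho\}$ to all of $\mathbb C\setminus\Gamma$ I would use a normal--families argument: if $K$ is compact with $K\cap\Gamma=\varnothing$, then only finitely many zeros of the $V_n$ (over all $n$) lie in $K$, and one shows that $\{\Psi_n\}$ is uniformly bounded on $K$; hence $\{\Psi_n\}$ is precompact in the topology of locally uniform convergence on $\mathbb C\setminus\Gamma$, every subsequential limit is holomorphic there and agrees with $\Psi$ on $\{|x|\ge\rho\}$, so by uniqueness of analytic continuation it equals $\Psi$, and Vitali's theorem upgrades this to locally uniform convergence on $\mathbb C\setminus\Gamma$.

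The statement for the right--orthogonal sequence follows by transposition. Transposing~\eqref{5trr_Gn}, the polynomials $\widetilde V_n := G_n^{\mathsf T}$ form a left--orthogonal sequence with recurrence coefficients $\widetilde A_n = C_{n+1}^{\mathsf T}$, $\widetilde B_n = B_n^{\mathsf T}$, $\widetilde C_n = A_{n-1}^{\mathsf T}$, which converge to $C^{\mathsf T}$, $B^{\mathsf T}$, $A^{\mathsf T}$; applying the part just proved and transposing back, together with the elementary identity $W_{C,B,A}^{\mathsf T} = W_{A^{\mathsf T},B^{\mathsf T},C^{\mathsf T}}$ (immediate from the biorthogonality defining these weights), gives $\displaystyle\lim_n C_n^{-1}\,G_n(x)^{-1}\,G_{n-1}(x) = \int\frac{dW_{C,B,A}(y)}{x-y}$, locally uniformly on $\mathbb C\setminus\Gamma$.

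The main obstacle is the quantitative input to the convergence step. Proving that the matrix iteration genuinely contracts is delicate because the matrices do not commute, so one must pin down the right region and metric (roughly, a neighbourhood of the "physical" branch of the algebraic function $\Psi$). Even more, the core difficulty is the a~priori uniform bound on $\{\Psi_n\}$ on compact subsets of $\mathbb C\setminus\Gamma$ --- equivalently, a uniform control preventing $\det V_n$ from degenerating away from the limiting zero set --- since this is exactly what makes the normal--families and analytic--continuation arguments legitimate, and hence what allows the asymptotics to be extended from $|x|$ large to all of $\mathbb C\setminus\Gamma$.
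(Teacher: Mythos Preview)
The paper does not prove this statement itself --- it is quoted from \cite{BMF1} --- but its proof of the more general Theorem~\ref{auxiliary_varying_coeff_theorem} uses a route entirely different from yours. There the ratio $C_n^{-1}G_n^{-1}G_{n-1}$ is written, via the partial-fraction Lemma~\ref{decomposition} and the Liouville--Ostrogradski identity~\eqref{Liouville-Ostrogradski}, as the Cauchy transform of an explicit discrete matrix measure $\mu_n$ supported on the zeros of $G_n$ and of total mass $I_N$; then, using the quadrature formula of Theorem~\ref{quadrature_formula_right}, one shows that all Chebyshev moments $\int d\mu_n\,T_l^{A,B,C}$ tend to $I_N\delta_{l,0}$, and concludes by the method of moments as in \cite{Duran2}. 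Your Riccati recursion $\Psi_{n+1}^{-1}=(xI-B_n)-C_n\Psi_nA_{n-1}$ is correct, the limiting fixed-point equation is precisely~\eqref{matrix_equation}, the contraction for large $|x|$ does go through (from $T_n(F_1)-T_n(F_2)=T_n(F_1)\,C_n(F_1-F_2)A_{n-1}\,T_n(F_2)$ the Lipschitz constant is $O(|x|^{-2})$), and the transposition trick for the $G_n$ sequence is legitimate. So the two approaches genuinely diverge: yours is lighter on preliminaries and pins the limit to the algebraic equation~\eqref{matrix_equation} from the outset, while the paper's moment method packages the ratio as a Cauchy transform of a normalized measure and thereby gets the extension from large $|x|$ to all of $\mathbb C\setminus\Gamma$ without a separate a~priori estimate.

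The obstacle you single out at the end --- a uniform bound on $\{\Psi_n\}$ on compact subsets of $\mathbb C\setminus\Gamma$, equivalently a uniform lower bound on $|\det V_n|$ away from $\Gamma$ --- is a genuine gap, not a technicality. In the positive-definite case one has real zeros, positive semidefinite Christoffel matrices, and interlacing, and the bound follows; in the merely quasidefinite matrix setting none of these tools are available, and the fact that $\det V_n$ has no zeros in a compact $K\subset\mathbb C\setminus\Gamma$ for $n\ge N_0$ does not by itself prevent $\det V_n$ from becoming arbitrarily small on $K$. I do not see how to close this within your framework without importing something like the paper's representation of $\Psi_n$ as the Cauchy transform of a measure with controlled mass --- which is exactly the machinery your approach was designed to bypass.
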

In the previous case, the coefficients in the recurrence relation are assumed to be convergent.

 The following result generalizes the previous one in two senses: we consider a case of varying recurrence coefficients and for a fixed~$k$, the recurrence coefficients will diverge in a particular way.

\begin{teo}\label{auxiliary_varying_coeff_theorem}
Let $W_k$ be for each $k\in \mathbb N$ a quasidefinite matrix of measures, and
$\{ R_{n,k} \}_{n\in \mathbb N}
$,
$\{ S_{n,k} \}_{n\in \mathbb N}
$
 be the sequences of biorthogonal matrix polynomials depending on a parameter
$k$, $k=1,2,\ldots$, satisfying~\eqref{recurrence_relation_varying_coefficients},~\eqref{recurrence_rel_ varying_right}.
Let $(n_m)_{m \in \mathbb N}$, $(k_m)_{m \in \mathbb N}$, be two increasing sequences of positive integers and we will assume that there exist three matrices $A$,~$B$, $C,$ with $A$ and~$B$ nonsingular, such that for all $l \in \mathbb N$, %conditions~\eqref{convergent_subsequences_coefficients} hold.
\begin{gather}
%\begin{split}
\label{convergent_subsequences_coefficients}
\lim_{m\to \infty}A_{n_m-l,k_m}=A \, , \
\lim_{m\to \infty}B_{n_m-l,k_m}=B \, , \
\lim_{m\to \infty}C_{n_m-l,k_m}=C \, .
%\end{split}
\end{gather}
We denote by $\widetilde \Delta_{n,k}$ the set of zeros of $S_{n_m,k_m}$ and 
$\displaystyle \widetilde\Gamma=\bigcap_{N\geq 0} \widetilde M_{N,k}$, where 
$\displaystyle
\widetilde M_{N,k}=\overline{\bigcup_{n\geq N}\widetilde\Delta_{n,k}}$.
Then,
\begin{gather} \label{ratio_asymptotic}
\lim_{m\to \infty} R_{n_m-1,k_m}(x) \, R_{n_m,k_m}^{-1}(x) \, A_{n_m-1,k_m}^{-1} =\int \frac{dW_{C,B,A}(t)}{x-t} \, , \ \ x\in\mathbb{C}\setminus  \Gamma \, ,
  \\
%\end{gather} \begin{gather}
\label{eq:ratioS}
\lim_{m\to \infty}
C_{n_m,k_m}^{-1} \, S_{n_m,k_m}^{-1}(x) \, S_{n_m-1,k_m}(x)  =\int \frac{dW_{C,B,A}(y)}{x-y} \, , \ \ x\in\mathbb{C}\setminus  \widetilde\Gamma \, ,
\end{gather}
where $W_{C,B,A}$ is the matrix weight for the generalized Chebyshev matrix polynomials defined in~\eqref{right_chebyshev_matrix_polynomials}.
Moreover, the convergence is locally uniform for $x$ on compact subsets of~$\mathbb{C}\setminus  \Gamma$.
\end{teo}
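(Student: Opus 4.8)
The plan is to prove \eqref{ratio_asymptotic} in detail and to obtain \eqref{eq:ratioS} by running the same argument on the dual recurrence \eqref{recurrence_rel_ varying_right}, using the right-orthogonal form of the quadrature formula in Theorem~\ref{quadrature_formula_right} and the Liouville--Ostrogradski identity \eqref{Liouville-Ostrogradski} in place of their left counterparts. So I concentrate on the family
\[
\Phi_m(x)\;=\;R_{n_m-1,k_m}(x)\,R_{n_m,k_m}^{-1}(x)\,A_{n_m-1,k_m}^{-1},\qquad m\in\mathbb N,
\]
and I write $F_{C,B,A}(x)=\int dW_{C,B,A}(t)/(x-t)$ for the target function. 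Comparing leading coefficients in \eqref{recurrence_relation_varying_coefficients} gives $R_{n-1,k}R_{n,k}^{-1}\sim A_{n-1,k}x^{-1}$, so $\Phi_m(x)=x^{-1}I_N+O(x^{-2})$ at infinity; and by Lemma~\ref{decomposition} applied to $P_n=R_{n_m,k_m}$ we may write $\Phi_m(x)=\sum_{j}M_{m,j}/(x-x_{n_m,k_m;j})$, the poles being the zeros of $R_{n_m,k_m}$ and $\sum_j M_{m,j}=I_N$.

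\textbf{Step 1 (normality).} The crux is that the residue matrices $M_{m,j}$ are bounded uniformly in $m$. This is the main technical estimate: one expresses the $M_{m,j}$, through Lemma~\ref{decomposition} and the quadrature identity of Theorem~\ref{quadrature_formula_right} combined with the Liouville--Ostrogradski relations \eqref{Liouville-Ostrogradski1}--\eqref{Liouville-Ostrogradski}, in terms of the quadrature data attached to $W_{k_m}$, which is controlled because $\int dW_{k_m}=I_N$ and because the recurrence coefficients at the relevant indices $n_m-l$ are bounded along the subsequence by \eqref{convergent_subsequences_coefficients}. Granting this, for $x$ outside the asymptotic zero set $\Gamma$ of $\{R_{n_m,k_m}\}$ the poles $x_{n_m,k_m;j}$ lie, for large $m$, in an arbitrarily small neighbourhood of $\Gamma$, so $\|\Phi_m(x)\|\le C/\operatorname{dist}(x,\Gamma)$ locally on $\mathbb C\setminus\Gamma$. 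Hence $\{\Phi_m\}$ is a normal family there, and every locally uniform limit $\Phi$ is analytic on $\mathbb C\setminus\Gamma$ with $\Phi(x)=x^{-1}I_N+O(x^{-2})$ at infinity.

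\textbf{Step 2 (identification of the limit).} Fix a subsequence along which $\Phi_m\to\Phi$ locally uniformly. For $l\ge0$ set $\psi_m^{(l)}(x)=R_{n_m-l-1,k_m}(x)R_{n_m-l,k_m}^{-1}(x)A_{n_m-l-1,k_m}^{-1}$; the estimate of Step~1 applies to each $\{\psi_m^{(l)}\}_m$, so by a diagonal extraction we may assume $\psi_m^{(l)}\to\psi^{(l)}$ locally uniformly for every $l$, with $\psi^{(0)}=\Phi$. Writing \eqref{recurrence_relation_varying_coefficients} at index $n_m-l-1$, right-multiplying by $R_{n_m-l,k_m}^{-1}$ and using $R_{n_m-l-2,k_m}R_{n_m-l,k_m}^{-1}=(\psi_m^{(l+1)}A_{n_m-l-2,k_m})(\psi_m^{(l)}A_{n_m-l-1,k_m})$, one gets a closed relation between $\psi_m^{(l)}$, $\psi_m^{(l+1)}$ and the coefficients at indices $n_m-l-1$ and $n_m-l-2$; letting $m\to\infty$ and invoking \eqref{convergent_subsequences_coefficients} for every shift, each limit satisfies
\[
x\,\psi^{(l)}(x)=I_N+B\,\psi^{(l)}(x)+C\,\psi^{(l+1)}(x)\,A\,\psi^{(l)}(x),\qquad l\ge0 .
\]
Expanding the $\psi^{(l)}$ in powers of $x^{-1}$ near infinity and matching coefficients, one checks by induction that all Taylor coefficients are independent of $l$; since $\mathbb C\setminus\Gamma$ is connected this forces $\psi^{(l)}\equiv\psi$ for all $l$, where $x\psi=I_N+B\psi+C\psi A\psi$ with $\psi(x)=x^{-1}I_N+O(x^{-2})$. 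This is exactly the quadratic matrix equation governing the constant-coefficient ratios $U_{n-1}^{C,B,A}(U_n^{C,B,A})^{-1}$, and by the stationary case \cite{BMF1} it has the unique such solution $F_{C,B,A}$. Hence $\Phi=\psi^{(0)}=F_{C,B,A}$; the limit is independent of the subsequence, so the whole sequence $\Phi_m$ converges to $F_{C,B,A}$, locally uniformly on compact subsets of $\mathbb C\setminus\Gamma$, which is \eqref{ratio_asymptotic}. As noted, \eqref{eq:ratioS} follows by the same scheme applied to \eqref{recurrence_rel_ varying_right} with the right-orthogonal quadrature and Liouville--Ostrogradski formulas, its exceptional set being $\widetilde\Gamma$.

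The main obstacle is the uniform boundedness of the residues $M_{m,j}$ in Step~1, i.e.\ the normality of $\{\Phi_m\}$: in the Hermitian positive-definite setting the quadrature weights are positive semidefinite and bounded by their sum $I_N$, but here $W_{k_m}$ is merely quasidefinite, so this shortcut is unavailable and one must extract the bound from the quadrature formula together with \eqref{Liouville-Ostrogradski1}--\eqref{Liouville-Ostrogradski} and the hypothesis \eqref{convergent_subsequences_coefficients}. A secondary point requiring care is the bookkeeping in the diagonal extraction and in passing to the limit in the shifted recurrences, where it is essential that \eqref{convergent_subsequences_coefficients} is assumed for every shift $l$, not only for $l=0$.
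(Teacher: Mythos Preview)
Your strategy is genuinely different from the paper's, and the difference is exactly where your argument breaks. After writing the ratio as the Stieltjes transform of the discrete matrix measure $\mu_{n,k}$ supported on the zeros of $R_{n,k}$ (via Lemma~\ref{decomposition} and the Liouville--Ostrogradski identity, just as you do), the paper makes \emph{no} attempt to bound the individual residues or to establish normality of $\{\Phi_m\}$. Instead it computes the moments of $\mu_{n_m,k_m}$ against the generalized Chebyshev polynomials directly: one expands $S_{n-1,k}(x)\,T_l^{A,B,C}(x)=S_{n,k}(x)K(x)+\sum_{i=1}^{n} S_{n-i,k}(x)\,\Delta_{i,l,n-1,k}$, applies the quadrature formula of Theorem~\ref{quadrature_formula_right} to convert the sum over zeros back into $\int R_{n-1,k}\,dW_k\,(\cdot)$, and then biorthogonality kills every term except $\Delta_{1,l,n-1,k}$. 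A short induction on $l$, using only the recurrences and \eqref{convergent_subsequences_coefficients} at the finitely many shifts involved, gives $\lim_m \Delta_{j,l,n_m-1,k_m}=I_N\delta_{j,l+1}$, hence $\lim_m\int d\mu_{n_m,k_m}\,T_l^{A,B,C}=I_N\delta_{l,0}$. The conclusion then follows by the method of moments as in~\cite{Duran2}. No residue bounds, no normal families, no quadratic-equation uniqueness are needed.

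Your Step~1, by contrast, is asserted but not proved, and you yourself flag it as ``the main obstacle''. In the Hermitian positive-definite situation the quadrature weights are positive semidefinite and sum to $I_N$, so each residue is automatically bounded; here $W_k$ is merely quasidefinite, the $M_{m,j}$ may have large cancelling parts, and the sentence ``one must extract the bound from the quadrature formula together with \eqref{Liouville-Ostrogradski1}--\eqref{Liouville-Ostrogradski} and \eqref{convergent_subsequences_coefficients}'' is not an argument---it is not clear that any such uniform bound is available for general signed matrix measures. Without it your normal-family step fails and the diagonal extraction in Step~2 is unjustified. Even granting normality, the identification in Step~2 rests on the uniqueness of the analytic solution of $C\psi A\psi+(B-xI)\psi+I=0$ with $\psi(x)\sim x^{-1}I_N$ at infinity, which in the nonsymmetric setting is more delicate than a citation of \cite{BMF1} suggests. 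The paper's moment computation bypasses both difficulties.
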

\begin{proof}
We will prove the asymptotic result~\eqref{eq:ratioS}. Notice that~\eqref{ratio_asymptotic} follows by using analogous arguments.
First, we consider the sequence of  discrete measures ~$\{\mu_{n,k} \}_{n\in\mathbb{N}}
$
defined~by
\begin{gather*}
\mu_{n,k}= \sum_{j=1}^{s} \delta_{x_{m,k,j}} R_{n-1,k}(x_{n,k,j}) \widetilde\Gamma_{n,k,j} S_{n-1,k}(x_{n,k,j}) \, , \ \ n \in \mathbb N \, ,
\end{gather*}
where $x_{n,k,j}$, $j=1,\ldots,s$ are the different zeros of the matrix polynomial, $R_{n,k} $, or, equivalently, the zeros of $S_{n,k}$ (cf. Theorem~\ref{zeros}) with %respective
multiplicities $\{\ell_ 1,\ldots, \ell_ s\}$,~and
\begin{gather*}
\widetilde \Gamma_{n,k,j}= S_{n-1,k}^{(1)}(x_{n,k,j})\frac{\ell_ j \big( \operatorname{Adj} \, (S_{n,k}(x))\big)^{(\ell_ j-1)}(x_{n,k,j})}{(\det(S_{n,k}(x)))^{(\ell_ j)}(x_{n,k,j})} \, .
\end{gather*}
Notice that from the definition of the quadrature formula	
\begin{multline*}%\begin{split}
\int d\mu_{n,k} (x)
 =
\sum_{j=1}^{s}  R_{n-1,k}(x_{n,k,j}) \widetilde\Gamma_{n,k,j} S_{n-1,k}(x_{n,k,j}) \\
=\int R_{n-1,k} (x) \, dW_k(x) \, S_{n-1,k}(x)= I_{N%\times N
} \, , \ \ n = 1, 2, \ldots \, .
%\end{split}
\end{multline*}
According to Lemma~\ref{decomposition}, we get
\begin{gather*}
\left( S_{n,k}(x) \right)^{-1} S_{n-1,k}(x)=  \sum_{k=1}^{s} \frac{D_{n,k,j}}{x-x_{n,k,j}} \, ,
\end{gather*}
where
\begin{gather}\label{coef_D_n,k}
D_{n,k,j}= \frac{\ell_ j}{(\det S_{n,k})^{(\ell_ j)} (x_{n,k,j})}  \big(\operatorname{Adj} \,  S_{n,k}(x)\big)^{(\ell_ j-1)} (x_{n,k,j}) S_{n-1,k}(x_{n,k,j}) \, .
\end{gather}
Multiplying in the left hand side of %both side of
~\eqref{coef_D_n,k} by $C_{n,k}^{-1}$
\begin{gather*}
C_{n,k}^{-1} D_{n,k,j}= C_{n,k}^{-1} \frac{\ell_ j}{(\det S_{n,k})^{(\ell_ j)} (x_{n,k,j})}  \big(\operatorname{Adj} \,  S_{n,k}(x)\big)^{(\ell_ k-1)} (x_{n,k,j}) S_{n-1,k}(x_{n,k,j}) \, ,
\end{gather*} %\vfil
and applying the Liouville-Ostrogradski formula~\eqref{Liouville-Ostrogradski} %and the fact that
\begin{gather*}
S_{n,k}(x_{n,k,j}) \big(\operatorname{Adj} \,  S_{n,k}\big)^{(\ell_ k-1)} (x_{n,k,j})= \pmb 0_{N%\times N
} \, ,
\end{gather*}
we get
\begin{gather*}
C_{n,k}^{-1} D_{n,k,j}= R_{n-1,k} (x_{n,k,j}) \, \widetilde\Gamma_{n,k} \, S_{n-1,k}(x_{n,k,j}) \, %$
.
\end{gather*}
From the definition of the matrices $\widetilde \Gamma_{n,k} $, we have
\begin{gather*}
C_{n,k}^{-1} S_{n,k}^{-1}(x) \, S_{n-1,k}(x) =\int \frac{d\mu_{n,k}(y)}{x-y} \, , \ \ x\in \mathbb{C}\setminus  \widetilde\Gamma \, .
\end{gather*}
For two given nonnegative integers $n$, $k \, $
let us consider the generalized Chebyshev matrix polynomials of the second kind,
$\{T_n^{A,B,C}(x)\}_{n\in\mathbb{N}}
$,
defined in~\eqref{right_chebyshev_matrix_polynomials}. We can prove by induction that
\begin{gather}\label{limite_medida}
\lim_{m\to \infty} \int d\mu_{n_m,k_m}(x) T_l^{A,B,C}(x)=
I_{N%\times N
}\, \delta_{l,0} \, .
%\begin{cases}
%I_{N\times N,} \quad \text{ for } l=0, \\
%0_{N\times N,} \quad \text{ for } l\neq 0 \, .
%\end{cases}
\end{gather}
To this end, we can write
\begin{gather}\label{desarrollo}
S_{n-1,k}(x) \, T_l^{A,B,C}(x) = S_{n,k}(x) \, K_{l,n-1,k}(x) + \sum_{i=1}^{n} S_{n-i,k}(x) \, \Delta_{i,l,n-1,k} \, ,
\end{gather}
where $K_{l,n-1,k}(x)$ is a matrix polynomial with degree less than or equal to $n-1$.~Thus,
\begin{multline*}
\int d\mu_{n,k}(x)T_l^{A,B,C}(x)=  \sum_{j=1}^{s}  R_{n-1,k} (x_{n,k,j}) \widetilde\Gamma_{n,k,j} S_{n-1,k}(x_{n,k,j})T_{l}^{A,B,C}(x_{n,k,j}) \\
 =  \sum_{j=1}^{s} R_{n-1,k}(x_{n,k,j})  \widetilde\Gamma_{n,k,j} \big( S_{n,k}(x_{n,k,j}) K_{l,n-1,k}(x_{n,k,j}) + \sum_{i=1}^{n} S_{n-i,k}(x_{n,k,j}) \Delta_{i,l,n-1,k} \big) \, .
\end{multline*}
According to the definition of the matrices $\widetilde \Gamma_{n,k}$ and taking into account %the fact
that
\begin{multline*}
\big(\operatorname{Adj} \, (S_{n,k})(x)\big)^{(\ell_ k-1)} (x_{n,k,j}) \, S_{n,k}(x_{n,k,j}) \\
= S_{n,k}(x_{n,k,j}) \, \big(\operatorname{Adj} \, (S_{n,k})(x)\big)^{(\ell_ k-1)} (x_{n,k,j}) = \pmb 0_{N%\times N
} \, ,
\end{multline*}
we get
\begin{gather*}
\int d\mu_{n,k}(x) \, T_l^{A,B,C}(x)= \sum_{j=1}^{s} R_{n-1,k}(x_{n,k,j})  \, \widetilde\Gamma_{n,k,j} \, \big( \sum_{i=1}^{n} S_{n-i,k}(x_{n,k,j}) \, \Delta_{i,l,n-1,k} \big) \, .
\end{gather*}
Using the quadrature formula given in Theorem~\ref{quadrature_formula_right}, we~conclude
\begin{multline*}
\int d\mu_{n,k}(x) \, T_l^{A,B,C}(x)
 \\
= \sum_{i=1}^{n} \, \int  R_{n-1,k}(x) \, dW_k(x) \, S_{n-i,k}(x) \, \Delta_{i,l,n-1,k} = \Delta_{1,l,n-1,k} \, .
\end{multline*}
So,~\eqref{limite_medida} %will
follows when %we prove that
%\begin{gather*}
$\displaystyle \lim_{m\to \infty} \Delta_{j,l,n_m-1,k_m}= I_{N %\times N
} \, \delta_{j,l+1} \, $, holds.
%\begin{cases}
%I_{N\times N,} \quad \text{ for } j=l+1, \\
%0_{N\times N,} \quad \text{ for } j\neq l+1\, .
%\end{cases}
%\end{gather*}
We use induction on $l$. When $l=0$ the result is immediate. Now assuming that the result is valid up to $l$, the three-term recurrence relation for the matrix polynomials
$\{ T_{n}^{A,B,C}\}_{n\in \mathbb N}
$~yields
\begin{gather*}
S_{n-1,k} (x) \, T_{l+1}^{A,B,C}(x) = S_{n-1,k} (x) \, \big( x T_{l}^{A,B,C}(x)-T_{l}^{A,B,C}(x) \, B-T_{l-1}^{A,B,C}(x) \, C \big) \, A^{-1} \, .
\end{gather*}
Using~\eqref{desarrollo} and the three-term recurrence relation for
$\{S_{n,k}\}_{n\in\mathbb{N}}
$
\begin{multline*}
%\begin{split}%\label{recurrencia_Delta}
\Delta_{j,l+1,n-1,k}= A_{n-j,k} \, \Delta_{j-1,l,n-1,k} \, A^{-1} +   B_{n-j,k}\, \Delta_{j,l,n-1,k} \, A^{-1}  \\
+ C_{n-j} \, \Delta_{j+1,l,n-1,k} \, A^{-1}
- \Delta_{j,l,n-1,k} \, B \, A^{-1} - \Delta_{j,l-1,n-1,k} \, C \, A^{-1} \, .
%\end{split}
\end{multline*}
For $j\geq l+3$ or $j\leq l-1$ the induction hypothesis shows that
\begin{gather*}
 \lim_{m\to  \infty} \Delta_{j,l+1,n_m-1,k_m}= \pmb 0_{N%\times N
 } \, .
\end{gather*}
We study the cases $j=l, j=l+1$, and $j=l+2$ separately:

{\noindent}{\bf Case 1}. $j=l$.
\begin{multline*}%\begin{split}
\lim_{m\to  \infty} \Delta_{j,l+1,n_m-1,k_m}=\lim_{m\to \infty} \big( A_{n_m-l,k_m} \, \Delta_{l-1,l,n_m-1,k_m} \, A^{-1}  \\
+   B_{n_m-l,k_m} \, \Delta_{l,l,n_m-1,k_m} \, A^{-1} + C_{n_m-l}\, \Delta_{l+1,l,n_m-1,k_m} \, A^{-1} - \Delta_{l,l,n_m-1,k_m} \, B \, A^{-1}  \\
- \Delta_{l,l-1,n_m-1,k_m} \, C \, A^{-1}\big) =(C-C) \, A^{-1}=\pmb 0_{N%\times N
} \, .
%\end{split}
\end{multline*}
{\bf Case 2}. $j=l+1$.
\begin{multline*}%\begin{split}
\lim_{m\to  \infty} \Delta_{l+1,l+1,n_m-1,k_m}= \lim_{m\to \infty} \big( A_{n_m-l-1,k_m} \Delta_{l,l,n_m-1,k_m} \, A^{-1}  \\
+   B_{n_m-l-1,k_m}\Delta_{l+1,l,n_m-1,k_m} \, A^{-1} + C_{n_m-l-1}\Delta_{l+2,l,n_m-1,k_m} \, A^{-1} - \Delta_{l+1,l,n_m-1,k_m} \, B A^{-1} \\
  - \Delta_{l+1,l-1,n_m-1,k_m} \, C A^{-1}\big) =(B-B)A^{-1}= \pmb 0_{N%\times N
  } \, .
%\end{split}
\end{multline*}
{\bf Case 3}. $j=l+2$.
\begin{multline*}%\begin{split}
\lim_{m\to  \infty} \Delta_{l+2,l+1,n_m-1,k_m}=\lim_{m\to \infty} \big( A_{n_m-l-2,k_m} \Delta_{l+1,l,n_m-1,k_m} \, A^{-1}  \\
+   B_{n_m-l-2,k_m}\Delta_{l+2,l,n_m-1,k_m} \, A^{-1} + C_{n_m-l-2}\Delta_{l+3,l,n_m-1,k_m} \, A^{-1} - \Delta_{l+2,l,n_m-1,k_m} \, B A^{-1} \\
 - \Delta_{l+2,l-1,n_m-1,k_m} \, C A^{-1}\big) = A \, A^{-1}=I_{N%\times N
 } \, .
%\end{split}
\end{multline*}
Now, in the same way that was done in~\cite{Duran2}, one can prove
\begin{gather*}
\lim_{m\to \infty} \int \frac{d\mu_{n_m,k_m}(y)}{x-y}=\int \frac{dW_{C,B,A}(y)}{x-y} \, , \ \ x\in\mathbb{C}\setminus  \Gamma \, ,
\end{gather*}
by using the so called \emph{method of moments}.
\end{proof}

\section{Main result} \label{sec:4}

\begin{defi}[cf. \cite{hor} Section 7.7]
Let $A,B \in \mathbb C^{N\times N} $ be Hermitian  matrices. We write $A\geq B$ if the matrix $A - B$ is positive semi-definite. Similarly, $A > B$ means that $A-B $ is positive definite.
\end{defi}
It is easy to see that the relation $\geq$ (respectively, $>$) is transitive and reflexive.
%, however can be two hermitian matrices $A<B$ such that neither $A\geq B$ nor~$B\geq A$.
  \begin{defi}
 Let
$(A_n)_{n\in \mathbb N}
$ be a sequence of Hermitian matrices. We say that
$(A_n)_{n\in \mathbb N}
$
is an increasing sequence if $A_{n+1} \geq A_n$ for every $n\in \mathbb N$.
\end{defi}

\begin{teo} \label{lema:zeros}
Let the matrix sequence
$(D_n)_{n\in \mathbb N} $ be increasing. Then, for each $n \in \mathbb N $, the polynomials $V_n^{D_n}(x)$
and
$G_n^{D_n}(x)$
have the same zeros.
Moreover, denoting by $\widetilde x_{n,j}$ the zeros of
$V_n^{D_n}(x)$ or $G_n^{D_n}(x)$, then there exists a positive constant $M$ (independent of~$n$) such that the the zeros $\widetilde x_{n,j}$ are contained in a disk
%\begin{gather*}
$ \operatorname D=\{z\in \mathbb{C}: |z|<M\} \, $.
%\end{gather*}
\end{teo}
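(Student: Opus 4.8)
The plan is to combine the zero-localization result of Theorem~\ref{zeros} with a scaling argument. First I would establish the equality of zeros. The scaled polynomials $V_n^{D_n}(x)=\pmb V_n(D_n x)$ and $G_n^{D_n}(x)=\pmb G_n(D_n x)$ are obtained from the one-variable polynomials $\pmb V_n$, $\pmb G_n$ by the substitution $T=D_n x$, and these in turn satisfy the three-term recurrence relations of Definition~\ref{Descalamiento} with coefficients $A_n,B_n,C_n$ (the original recurrence coefficients, which, being biorthogonal recurrence coefficients from Theorem~\ref{t:BMF}, have $A_n,C_n$ nonsingular). Hence, for a \emph{fixed} $n$, the pair $\pmb V_n,\pmb G_n$ plays exactly the role of $V_n,G_n$ in Theorem~\ref{zeros}: $\pmb G_n$ satisfies the transposed recurrence, so the $N$-block Jacobi matrix for $\pmb G_n$ is the transpose of that for $\pmb V_n$, and the zeros of $\pmb V_n$ are the eigenvalues of the truncated Jacobi matrix while those of $\pmb G_n$ are the eigenvalues of its transpose. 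Therefore $\det \pmb V_n$ and $\det \pmb G_n$ have the same roots; composing with the invertible linear change $T\mapsto D_n x$ (here $D_n$ is positive definite, hence nonsingular) shows $V_n^{D_n}$ and $G_n^{D_n}$ share the same zeros $\widetilde x_{n,j}$.

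Next I would prove the uniform boundedness of the $\widetilde x_{n,j}$. By the observation above, the zeros of $\pmb V_n$ are the eigenvalues of the truncated block Jacobi matrix built from $A_0,\dots,A_{n-1}$, $B_0,\dots,B_{n-1}$, $C_1,\dots,C_{n-1}$, and the zeros $\widetilde x_{n,j}$ of $V_n^{D_n}$ are obtained by multiplying these on the left by $D_n^{-1}$ in the appropriate block-scaled sense. The cleaner route is to work directly with the scaled Jacobi matrix: $\widetilde x_{n,j}$ are the eigenvalues of the $nN\times nN$ truncation of the block-tridiagonal matrix with diagonal blocks $D_n^{-1/2}B_iD_n^{-1/2}$ and off-diagonal blocks $D_n^{-1/2}A_iD_n^{-1/2}$, $D_n^{-1/2}C_iD_n^{-1/2}$ for $0\le i\le n-1$. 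Since $(D_n)_{n\in\mathbb N}$ is increasing, for $i\le n$ we have $D_i\le D_n$, and together with the convergence conditions~\eqref{divergence_conditions} the matrices $D_n^{-1/2}A_iD_n^{-1/2}$, $D_n^{-1/2}B_iD_n^{-1/2}$, $D_n^{-1/2}C_iD_n^{-1/2}$ are uniformly bounded in operator norm by a constant independent of $n$ and $i$. A block-tridiagonal matrix with uniformly bounded blocks has operator norm bounded by a fixed multiple of that bound (each row has at most three nonzero blocks), so all its eigenvalues lie in a disk $\operatorname D=\{z:|z|<M\}$ with $M$ independent of $n$.

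The main obstacle is making the second step rigorous: one must verify that the scaled zeros $\widetilde x_{n,j}$ really are the eigenvalues of the \emph{symmetrically} scaled truncated Jacobi matrix (equivalently, of $D_n^{-1/2}J_n D_n^{1/2}$-type conjugates), and that the increasing hypothesis on $(D_n)$ plus~\eqref{divergence_conditions} genuinely delivers a uniform bound on $\|D_n^{-1/2}A_iD_n^{-1/2}\|$ etc.\ for \emph{all} $i\le n$ simultaneously — the convergence statements in~\eqref{divergence_conditions} are about $i=n$, so one needs the monotonicity $D_i\le D_n$ to transfer the bound down to smaller indices, using that $0\le D_n^{-1/2}D_iD_n^{-1/2}\le I_N$ and a Löwner-order manipulation of the form $D_n^{-1/2}A_iD_n^{-1/2}=\big(D_n^{-1/2}D_i^{1/2}\big)\big(D_i^{-1/2}A_iD_i^{-1/2}\big)\big(D_i^{1/2}D_n^{-1/2}\big)$ together with boundedness of the convergent sequences $D_i^{-1/2}A_iD_i^{-1/2}\to A$. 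Once this uniform block bound is in hand, the eigenvalue localization is routine.
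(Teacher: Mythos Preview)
Your overall strategy matches the paper's: identify the zeros of the scaled polynomials as eigenvalues of the symmetrically scaled truncated block Jacobi matrix $\widetilde J^{(n)}_{nN}$ with blocks $D_n^{-1/2}A_iD_n^{-1/2}$, $D_n^{-1/2}B_iD_n^{-1/2}$, $D_n^{-1/2}C_iD_n^{-1/2}$, then bound those entries uniformly in $n$ via the factorization $D_n^{-1/2}A_iD_n^{-1/2}=(D_n^{-1/2}D_i^{1/2})(D_i^{-1/2}A_iD_i^{-1/2})(D_i^{1/2}D_n^{-1/2})$, using convergence of the middle factor and the monotonicity $D_i\le D_n$ to control the outer ones. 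The paper invokes Gershgorin where you invoke a block-norm bound; these are equivalent here.

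One point to fix: the ``composition'' argument in your first paragraph does not work. The substitution $T\mapsto D_n x$ sends the scalar variable to a genuine matrix, so a root $\lambda$ of $\det\pmb V_n$ (viewed as a scalar polynomial) does not correspond to a zero of $V_n^{D_n}(x)=\pmb V_n(D_n x)$ via any linear change of the scalar $x$; there is in general no $x$ with $D_n x=\lambda I_N$. The equality of zeros of $V_n^{D_n}$ and $G_n^{D_n}$ should instead come directly from the scaled Jacobi matrix you already introduce in paragraphs two and three: $D_n^{1/2}V_n^{D_n}$ and $G_n^{D_n}D_n^{1/2}$ satisfy three-term recurrences whose block Jacobi matrices are transposes of one another (this is exactly how the paper argues), so their zeros---eigenvalues of $\widetilde J^{(n)}_{nN}$ and of its transpose---coincide. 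Once you reroute the first claim through that argument, the proof is complete and essentially identical to the paper's.
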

\begin{proof}
The $N$-block Jacobi~matrix associated with $G_n^{D_k}(x) \, D_k^{1/2}$ is the transpose of the matrix
\begin{gather*}
\widetilde{J}^{(k)}=\left(\begin{matrix}
D_k^{-1/2} B_0D_k^{-1/2}  & D_k^{-1/2}  A_0D_k^{-1/2}  & \pmb 0_{N%\times N
} &   \\
D_k^{-1/2} C_1D_k^{-1/2}  & D_k^{-1/2} B_1D_k^{-1/2}  & D_k^{-1/2} A_1D_k^{-1/2}  & \ddots \\
 \pmb 0_{N%\times N
 } & D_k^{-1/2} C_2D_k^{-1/2} &D_k^{-1/2} B_2D_k^{-1/2}  & \ddots \\
  & \ddots & \ddots & \ddots
\end{matrix}\right) \, ,
\end{gather*}
associated to $D_k^{1/2} V_n^{D_k}(x)$.
The first statement is an straightforward consequence of the fact that the zeros of $V_n^{D_n} (x)$ are the eigenvalues of $\widetilde{J}^{(k)}_{nN}$ (truncated $N$-block Jacobi~matrix of dimension $nN$) and the zeros of $G_n^{D_n}(x) \, D_k^{1/2}$ are the eigenvalues of $\widetilde{J}_{nN}^{(k)T}$ (the transpose of the previous one).
Using the Gershgorin disk theorem for the location of eigenvalues, it is enough to show that the entries of the matrix~$\widetilde{J}_{nN}$ are bounded (independently of $n$). But the entries of this matrix~are
\begin{gather*}
D_k^{-1/2}  A_n \, D_k^{-1/2} = D_k^{-1/2} D_n^{1/2} D_n^{-1/2} A_n \, D_n^{-1/2} D_n^{1/2} D_k^{-1/2}, \\
D_k^{-1/2}  B_n \, D_k^{-1/2} = D_k^{-1/2} D_n^{1/2}D_n^{-1/2} B_n \, D_n^{-1/2} D_n^{1/2} D_k^{-1/2}, \\
D_k^{-1/2}  C_n \, D_k^{-1/2} = D_k^{-1/2} D_n^{1/2}D_n^{-1/2} C_n \, D_n^{-1/2} D_n^{1/2} D_k^{-1/2} \, .
\end{gather*}
Since  the matrices $ D_n^{-1/2} A_n \, D_n^{-1/2}$, $D_n^{-1/2} B_n \, D_n^{-1/2} ,$ and $ D_n^{-1/2} C_n \, D_n^{-1/2}$ converge and $I_{N%\times N
} \geq D_k^{-1/2} D_n^{1/2}$ if $n\leq k$ since
$(D_n)_{n\in \mathbb N}
$
is an increasing sequence, then the result follows.
\end{proof}

The theory of matrix orthogonal polynomials with varying recurrence coefficients studied in the previous section allows us to prove the main result of this manuscript, i.e. the outer ratio asymptotics for matrix biorthogonal polynomials satisfying three-term recurrence relations as in~\eqref{5trr_Vn} and~\eqref{5trr_Gn} with unbounded coefficients reads as follows.
\begin{teo}[Outer ratio asymptotics] \label{main result}
Let
$\{ V_n \}_{n\in \mathbb N}
$ and
$\{ G_n \}_{n\in \mathbb N}
$
be the sequences of biorthogonal matrix polynomials  with respect to a quasidefinite matrix of measures, $W$, satisfying the recurrence relations \eqref{5trr_Vn}, \eqref{5trr_Gn}, respectively. Let assume that there exists a sequence of positive definite matrices~$(D_n)_{n\in \mathbb N}
$
such that~\eqref{divergence_conditions} holds with $A$ and $C$ nonsingular matrices, and consider the scaled matrix polynomials sequences $\{ V_n^{D_n} \}$, $\{G_n^{D_n} \}$ given in Definition~\ref{Descalamiento}.
%The biorthogonal polynomials $V_n^{D_n}(z)$ and~$G_n^{D_n}(z)$ have the same zeros. 
We denote by~$\Delta_n$ the set of zeros of $V_n^{D_n}$ and  by $\displaystyle  \Gamma=\bigcap_{N \geq 0} M_N$, where $\displaystyle M_N=\overline{\bigcup_{n\geq N}\Delta_n}$.
Then,
%\begin{itemize}
%\item[(a)]

\hangindent=.75cm \hangafter=1
{\noindent}(a)
If we assume that the matrix sequence
$(D_n)_{n\in \mathbb N}
$
is increasing, then $\Gamma$ is a compact~set.
%\item[(b)]

\hangindent=.75cm \hangafter=1
{\noindent}(b)
The following outer ratio asymptotics hold
\begin{gather*}
\lim_{n\to \infty} D_n^{1/2} V_{n-1}^{D_n}(z) \, \left(V_n^{D_n}(z)\right)^{-1} A_{n-1}^{-1} D_n^{1/2} =\int \frac{dW_{C,B,A}(t)}{z-t} \, , \ \ z \in\mathbb{C}\setminus  \Gamma, \\
%\end{gather*}
%\begin{gather*}
\lim_{n\to \infty} D_n^{1/2} C_n^{-1} \left(G_{n}^{D_n}(z)\right)^{-1}G_{n-1}^{D_n}(z) \, D_n^{1/2}
=\int \frac{dW_{C,B,A}(t)}{z-t} \, , \ \ z
\in\mathbb{C}\setminus  \Gamma \, ,
\end{gather*}
where $W_{C,B,A}$ is the matrix weight for the generalized Chebyshev matrix polynomials of the second kind defined by~\eqref{Chebyshev_recurrence_relation}.
 %\\
Moreover, the convergence is locally uniform for $z$ on compact subsets of~$\mathbb{C}\setminus  \Gamma$.
%\end{itemize}
\end{teo}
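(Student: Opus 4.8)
The plan is to reduce the unbounded-coefficient problem to the varying-coefficient result of Theorem~\ref{auxiliary_varying_coeff_theorem} by passing to the scaled polynomials. First, recall from Definition~\ref{Descalamiento} and the remark following it that for each fixed $k$ the scaled sequences $\{V_n^{D_k}\}_{n\in\mathbb N}$ and $\{G_n^{D_k}\}_{n\in\mathbb N}$ are biorthogonal with respect to a quasidefinite matrix of measures $W_k$, and satisfy three-term recurrence relations of the form~\eqref{recurrence_relation_varying_coefficients},~\eqref{recurrence_rel_ varying_right} with recurrence coefficients
\begin{gather*}
A_{n,k}=D_k^{1/2}D_n^{-1/2}\big(D_n^{-1/2}A_nD_n^{-1/2}\big)D_n^{1/2}D_k^{-1/2}\,,
\end{gather*}
and analogously for $B_{n,k}$, $C_{n,k}$ (obtained by inserting $D_n^{\pm1/2}$ as in the proof of Theorem~\ref{lema:zeros}). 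Part (a) is essentially Theorem~\ref{lema:zeros}: when $(D_n)_{n\in\mathbb N}$ is increasing, the Gershgorin argument given there shows the zeros of $V_n^{D_n}$ lie in a fixed disk $\operatorname D$ independent of $n$, so $M_N\subseteq\overline{\operatorname D}$ for every $N$ and hence $\Gamma$ is closed and bounded, i.e. compact. (If $(D_n)$ is not assumed increasing one still has the ratio asymptotics of part (b) on $\mathbb C\setminus\Gamma$, but $\Gamma$ need not be bounded.)

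For part (b), the key point is to verify the hypothesis~\eqref{convergent_subsequences_coefficients} of Theorem~\ref{auxiliary_varying_coeff_theorem} along the diagonal choice $n_m=m$, $k_m=m$. Fix $l\in\mathbb N$ and examine $A_{m-l,m}=D_m^{1/2}D_{m-l}^{-1/2}\big(D_{m-l}^{-1/2}A_{m-l}D_{m-l}^{-1/2}\big)D_{m-l}^{1/2}D_m^{-1/2}$. By~\eqref{divergence_conditions} the middle factor tends to $A$; the outer factors are telescoping products $D_m^{1/2}D_{m-l}^{-1/2}=\prod_{j=1}^{l}D_{m-j+1}^{1/2}D_{m-j}^{-1/2}$ (and its inverse transpose-type counterpart), each factor of which converges to $I_N$ by the fourth limit in~\eqref{divergence_conditions}; since $l$ is fixed these are finite products, so $D_m^{1/2}D_{m-l}^{-1/2}\to I_N$ and likewise $D_{m-l}^{1/2}D_m^{-1/2}\to I_N$. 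Hence $A_{m-l,m}\to A$, and the same computation gives $B_{m-l,m}\to B$, $C_{m-l,m}\to C$. Thus~\eqref{convergent_subsequences_coefficients} holds with these $A$, $B$, $C$; note $A$ and $C$ are nonsingular by assumption, and $B$ plays the role of the ``nonsingular $B$'' needed in Theorem~\ref{auxiliary_varying_coeff_theorem} only as a formal hypothesis — in fact the proof of that theorem uses $A$ nonsingular (to invert it in the recurrence for the Chebyshev polynomials) rather than $B$, so the argument applies. Applying~\eqref{ratio_asymptotic} and~\eqref{eq:ratioS} with $R_{n,k}=V_n^{D_k}$, $S_{n,k}=G_n^{D_k}$ then yields, after multiplying on the left and right by $D_n^{1/2}$ and $D_n^{1/2}$ to undo the scaling absorbed into $A_{n,k}^{-1}$ and $C_{n,k}^{-1}$,
\begin{gather*}
\lim_{n\to\infty}D_n^{1/2}V_{n-1}^{D_n}(z)\big(V_n^{D_n}(z)\big)^{-1}A_{n-1}^{-1}D_n^{1/2}=\int\frac{dW_{C,B,A}(t)}{z-t}\,,
\end{gather*}
and the companion formula for the $G$'s, with locally uniform convergence on compact subsets of $\mathbb C\setminus\Gamma$ as asserted.

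The main obstacle I anticipate is bookkeeping the scaling factors correctly: one must check that the quantity $R_{n_m-1,k_m}R_{n_m,k_m}^{-1}A_{n_m-1,k_m}^{-1}$ appearing in~\eqref{ratio_asymptotic}, when written in terms of the original unscaled data via $A_{n-1,k}=D_k^{1/2}D_{n-1}^{-1/2}(D_{n-1}^{-1/2}A_{n-1}D_{n-1}^{-1/2})D_{n-1}^{1/2}D_k^{-1/2}$ evaluated at $k=n$, collapses (using $D_n^{1/2}D_{n-1}^{-1/2}\to I_N$ and the continuity of inversion) to $D_n^{1/2}V_{n-1}^{D_n}(z)(V_n^{D_n}(z))^{-1}A_{n-1}^{-1}D_n^{1/2}$ in the limit; the extra telescoping factors $D_n^{1/2}D_{n-1}^{-1/2}$ that arise converge to the identity and so do not affect the limit. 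A secondary point worth spelling out is that the limit set $\Gamma$ for the scaled polynomials $V_n^{D_n}$ is the same set that governs $\widetilde\Gamma$ for $G_n^{D_n}$ (the two families share zeros by Theorem~\ref{zeros}, applied to each $W_n$), so both displayed limits hold on the common region $\mathbb C\setminus\Gamma$.
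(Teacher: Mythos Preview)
Your overall strategy is exactly the paper's: reduce to Theorem~\ref{auxiliary_varying_coeff_theorem} via the scaled polynomials along the diagonal $n_m=k_m=m$, and invoke Theorem~\ref{lema:zeros} for part~(a). The execution of the scaling, however, is garbled. Your displayed formula $A_{n,k}=D_k^{1/2}D_n^{-1/2}(D_n^{-1/2}A_nD_n^{-1/2})D_n^{1/2}D_k^{-1/2}$ multiplies out to $D_k^{1/2}D_n^{-1}A_nD_k^{-1/2}$, which is not what any recurrence for the scaled polynomials produces; and your later choice $R_{n,k}=V_n^{D_k}$ would actually give $A_{n,k}=D_k^{-1}A_n$, which is inconsistent with that formula and does not deliver the target expression without further juggling.

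The clean choice (the one the paper makes) is $R_{n,k}(x)=D_k^{1/2}V_n^{D_k}(x)$ and $S_{n,k}(x)=G_n^{D_k}(x)\,D_k^{1/2}$. Multiplying the scaled recurrence $x\,D_k\,V_n^{D_k}=A_nV_{n+1}^{D_k}+B_nV_n^{D_k}+C_nV_{n-1}^{D_k}$ on the left by $D_k^{-1/2}$ and inserting $D_k^{-1/2}D_k^{1/2}$ before each $V$ yields the symmetric coefficients
\[
A_{n,k}=D_k^{-1/2}A_nD_k^{-1/2},\qquad B_{n,k}=D_k^{-1/2}B_nD_k^{-1/2},\qquad C_{n,k}=D_k^{-1/2}C_nD_k^{-1/2}\,.
\]
Your telescoping argument then applies verbatim (with the correct outer factors $D_m^{-1/2}D_{m-l}^{1/2}\to I_N$) to verify~\eqref{convergent_subsequences_coefficients}. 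With this normalisation the ratio from Theorem~\ref{auxiliary_varying_coeff_theorem} unwinds \emph{exactly}, with no residual factors to absorb:
\[
R_{m-1,m}R_{m,m}^{-1}A_{m-1,m}^{-1}
=D_m^{1/2}V_{m-1}^{D_m}\big(V_m^{D_m}\big)^{-1}D_m^{-1/2}\cdot D_m^{1/2}A_{m-1}^{-1}D_m^{1/2}
=D_m^{1/2}V_{m-1}^{D_m}\big(V_m^{D_m}\big)^{-1}A_{m-1}^{-1}D_m^{1/2}\,,
\]
so the ``extra telescoping factors'' you anticipate never arise. Your side remarks --- that the proof of Theorem~\ref{auxiliary_varying_coeff_theorem} only uses invertibility of $A$ (not $B$), and that $\Gamma=\widetilde\Gamma$ via Theorem~\ref{zeros} applied to each $W_k$ --- are both correct and worth retaining.
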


\begin{proof}
Let
$(D_n)_{n\in \mathbb N}
$
be a sequence of $N\times N$ positive definite matrices. In order to apply Theorem%s~\ref{auxiliary_varying_coeff_theorem_1} and
~\ref{auxiliary_varying_coeff_theorem}, we consider the scaled matrix polynomials $V_n^{D_k}(x)$, $G_n^{D_k}(x)$ associated with the parameters
$(A_n)_{n\in \mathbb N}
$,
$(B_n)_{n\in \mathbb N}
$,
$(C_n)_{n\in \mathbb N}
$.
Taking into account their definitions, we~have
\begin{gather}
x \, D_k  \,V_n^{D_k}(x) =A_n \, V_{n+1}^{D_k} \, (x)+B_n \, V_n^{D_k} (x)+C_n \, V_{n-1}^{D_k}(x) \, , \label{eq:scaledVn} %\\
\end{gather}
\begin{gather}
G_m^{D_k} \, (x) \, D_k \, x  = G_{m-1}^{D_k} (x) \, A_{m-1}+ G_m^{D_k} (x) \, B_m+ G_{m+1}^{D_k} (x) \, C_{m+1} \, , \label{eq:scaledGn}
\end{gather}
and so
\begin{multline*}%\begin{split} \label{3trr_R_n,k}
x D_k^{1/2} \, V_n^{D_k}(x)
=
D_k^{-1/2} A_n \, D_k^{-1/2} D_k^{1/2} \, V_{n+1}^{D_k}(x) \\
 +D_k^{-1/2}B_n \, D_k^{-1/2}D_k^{1/2} \, V_n^{D_k}(x)
+D_k^{-1/2}C_n \, D_k^{-1/2}D_k^{1/2} \, V_{n-1}^{D_k}(x) \, ,
%\end{split}
\end{multline*}
%or
%\vspace{-1cm}
\begin{multline*}%\begin{split}\label{3trrS_n,k}
x  G_n^{D_k}(x) \, D_k^{1/2} = G_{n-1}^{D_k}(x) \, D_k^{1/2} D_k^{-1/2} A_{n-1}\, D_k^{-1/2} \\
+ G_n^{D_k}(x) \, D_k^{1/2} D_k^{-1/2} B_n \, D_k^{-1/2}
+ G_{n+1}^{D_k}(x) \, D_k^{1/2} D_k^{-1/2} C_{n+1} \, D_k^{-1/2} \, ,
%\end{split}
\end{multline*}
respectively.
For each $k$, taking
\begin{gather*}
\left\{
\begin{array}{l}
R_{n,k}(x) = D_k^{1/2} V_n^{D_k}(x) \, , \
S_{n,k}(x) =  G_n^{D_k}(x) \, D_k^{1/2} \, , \\
A_{n,k} = D_k^{-1/2} A_n \, D_k^{-1/2} \, , \
B_{n,k} = D_k^{-1/2} B_n \, D_k^{-1/2} \, , \
C_{n,k} = D_k^{-1/2} C_n \, D_k^{-1/2} \, ,
\end{array}
\right.
\end{gather*}
the matrix polynomial sequences
$
\{ R_{n,k} \}_{n\in \mathbb N}
$,
$
\{ S_{n,k} \}_{n\in \mathbb N}
$
satisfy the %following
three-term recurrence relations,~\eqref{recurrence_relation_varying_coefficients},~\eqref{recurrence_rel_ varying_right}, respectively,
%\begin{gather*}
%xR_{n,k}(x)= A_{n,k} \, R_{n+1,k}(x)+B_{n,k} \, R_{n,k}(x)+C_{n,k} \, R_{n-1,k}(x) \, , \ \ n \in \mathbb N \, , \\
%xS_{n,k}(x)= S_{n+1,k}(x) \, C_{n+1,k}+S_{n,k}(x) \, B_{n,k}+S_{n-1,k}(x) \, A_{n-1,k} \, , \ \ n \in \mathbb N \, ,
%\end{gather*}
 with initial conditions
\ $R_{0,k}(x)= G_{0,k}(x) =D_k^{1/2}$ \ and \ \ $R_{-1,k}(x)=G_{-1,k}(x)=\pmb 0_{N%\times N
}$.
Then, the matrix polynomial sequences
$ \{ R_{n,k} \}_{n\in \mathbb N}
$,~$\{ S_{n,k} \}_{n\in \mathbb N}
$
are orthogonal with respect to a certain varying matrix of measures~$W_k$.
Under the assumptions~\eqref{divergence_conditions} it is easy to see that the limit conditions~\eqref{convergent_subsequences_coefficients} are satisfied for $n_m=k_m=m$. Then, Theorem~\ref{main result} holds.

Finally, from Theorem~\ref{lema:zeros} we have that if the matrix sequence~$(D_n)_{n\in \mathbb N}
$
is increasing, then the zeros of $V_n^{D_n}(x)$
and
$G_n^{D_n}(x)$
are bounded (and so $\Gamma$ is a compact set), as we wanted to prove.
\end{proof}

\begin{example}
In Example~\ref{exa1} let us take the scalar measure $d\mu=x^{\alpha}e^{-x}dx,$ $\alpha>-1,$ supported on $ \, ]0,\infty[ \, $  and
$\{ L^\alpha_n \}_{n\in\mathbb N}
$
its respective monic orthogonal polynomial sequence (Laguerre polynomials of parameter $\alpha$). The biorthogonal sequences with respect to the new measure
$\left(\begin{smallmatrix}
x & -1 \\
0 & x
\end{smallmatrix} \right)
\, x^{\alpha}e^{-x}dx
$
are given by
\begin{gather*}
V_{n}(x)=\begin{pmatrix}
L_{n}^{\alpha+1}(x) & \frac{{L_{n}^{\alpha+1}(x)-
\frac{\alpha+n+1}{\alpha+1}L_{n}^{\alpha}(x)}}{x} \\[10pt]
0 & L_{n}^{\alpha+1}(x)
\end{pmatrix}=\begin{pmatrix}
L_{n}^{\alpha+1}(x)& -\frac{n}{\alpha+1}L^{\alpha+2}_{n-1}(x) \\[10pt]
&L_{n}^{\alpha+1}(x) \end{pmatrix} \, , \\[10pt]
 G_n(x)=-
 \begin{pmatrix}
 (-1)^n\frac{\Gamma(\alpha+1)K^{\alpha}_n(x,0)}{\Gamma(\alpha+n+1)} & 0\\[6pt]
 \frac{nK^{\alpha}_n(x,0)}{(\alpha+1)}+
 (-1)^n\frac{\Gamma(\alpha+1)}{\Gamma(\alpha+n+1)}\frac{\partial K^{\alpha}_n(x,0)}{\partial y} & (-1)^n\frac{\Gamma(\alpha+1)K^{\alpha}_n(x,0)}{\Gamma(\alpha+n+1)}
 \end{pmatrix} \, .
\end{gather*}
Moreover, the sequences
$\{ V_n \}_{n\in\mathbb N}
$,
$\{G_n \}_{n\in\mathbb N}
$
satisfy the %following
three-term recurrence relations \eqref{5trr_Vn}, \eqref{5trr_Gn}
%\begin{gather*}
%x V_n (x) = V_{n+1}(x) + B_n \, V_n (x) + C_n \, V_{n-1} (x) \, , \\
%x G_n (x) = G_{n+1}(x)C_{n+1} + G_n (x) B_n  + G_{n-1} (x) \, ,
%\end{gather*}
with $A_n = I_N,$ 
\begin{gather*}
B_n=\left(
\begin{matrix}
2n+\alpha+2 & -%\dfrac
{ 2}/{(\alpha+1)} \\ %[15pt]
0 & 2n+\alpha+2\\
\end{matrix}\right) \, , \ \  C_n=\left(
\begin{matrix}
n(\alpha+n+1) & 0 \\ %[15pt]
0 & n(\alpha+n+1)
\end{matrix}
\right) \, .
\end{gather*}
On the other hand, %if we take
taking the sequence of positive definite matrices
$(D_n)_{n \in \mathbb N}
$, where
%\begin{gather*}
$ D_n=\begin{pmatrix}
n^2 & 0\\
0 & n^2
\end{pmatrix} \, $, \
we find that
%\end{gather*}
%\ \mbox{ we have } \ \ \
%\begin{gather*}
$ D_n^{1/2}=\begin{pmatrix}
 n & 0 \\
 0 & n
 \end{pmatrix} \, $, \
$D_n^{-1/2}=\begin{pmatrix}
 1/n&0\\
 0&1/n
 \end{pmatrix} $ \
  and  \
$D_n^{-1/2} \, D_{n-1}^{1/2} = I_{N} \, $.
%\end{gather*}
 Thus, \
 \begin{gather*}
 D_n^{-1/2}B_n \, D_n^{-1/2}=
 \left(
 \begin{matrix}
 %\frac
 {(2n+\alpha+2)}/{n^2} & - %\frac
 {2}/({(\alpha+1) n^2}) \\%[10pt]
 0 & %\frac
 {(2n+\alpha+2)}/{n^2} \\
 \end{matrix}
 \right) \, , \ \mbox{ and so  } \ \ D_n^{-1/2}B_n D_n^{-1/2} \to \pmb 0_N
 %\begin{pmatrix} 0&0\\0&0 \end{pmatrix}
 \, , \\
%\end{gather*}
%\begin{gather*}
D_n^{-1/2}C_n D_n^{-1/2}=\left(
 \begin{matrix}
 \frac{\alpha+n+1}{n} &
 0 \\[6pt]
 0 & \frac{\alpha+n+1}{n}
 \end{matrix}
 \right) \, , \
\mbox{ and so } \ \
 D_n^{-1/2} C_n \, D_n^{-1/2}
      \to
 \begin{pmatrix}
 1 & 0 \\[6pt]
 0 & 1
 \end{pmatrix} \, .
 \end{gather*}
From Theorem~\ref{main result} we get
\begin{gather*}
\lim_{n \to \infty} D_n^{1/2} V_{n-1}^{D_n}(z)\left(V_n^{D_n}(z)\right)^{-1}  D_n^{1/2} =\int \frac{dW_{I,0,I}(t)}{z-t} \, ,
 \ \ z\in\mathbb{C}\setminus  \Gamma \, , \\
%\end{gather*}
%\begin{gather*}
\lim_{n \to \infty} D_n^{1/2}C_n^{-1} \left(G_{n}^{D_n}(z)\right)^{-1}G_{n-1}^{D_n}(z) D_n^{1/2} =\int \frac{dW_{I,0,I}(t)}{z-t} \, , \ \  z\in\mathbb{C}\setminus  \Gamma \, ,
\end{gather*}
where $W_{I,0,I} $ is the matrix of measures %respect
associated with %to a
the sequence of orthogonal polynomials $\{ U_n^{C,0,I} \}_{n \in \mathbb N}$  satisfying
\begin{gather*}
x U_n^{I,0,I}(x)=U_{n+1}^{I,0,I}(x)+
U_{n-1}^{I,0,I}(x) \, , \ \ n\in \mathbb N \, , 
\end{gather*}
with initial conditions $U_0^{I,0,I}(x)=
I_{N%\times N
}$, %and 
$U_{-1}^{I,0,I}(x)=\pmb 0_{N%\times N
}$.

 Notice that it is an  interesting situation,  because  $W_{I,0,I}(x)$ is a positive definite matrix of measures.
Moreover, using Corollary~2.3 in~\cite{Duran1} we obtain
\begin{gather*}
\int \frac{dW_{I,0,I}(t)}{z-t}=\frac{z\, I}{2}-\frac{\sqrt{(z^2-4) \, I}}{2} \, .
\end{gather*}
\end{example}

\begin{example}
Let
$\{ V_n \}_{n\in \mathbb N}
$,
$\{ G_n \}_{n\in \mathbb N}
$
be matrix polynomial sequences satisfying the recurrence relations~\eqref{5trr_Vn} and~\eqref{5trr_Gn}, respectively, with
\begin{gather*}
A_n=\left(
\begin{matrix}
2 n^2 & 0 \\
7 n^2+1 & 5 n^2
\end{matrix}
\right) \, , \ \  B_n=\left(
\begin{matrix}
3/n & 4/n \\ %[5pt]
n & 8 n^2
\end{matrix}
\right) \, , \ \ C_n=\begin{pmatrix}
2n^2&2n\\
0 &n
\end{pmatrix} \, .
\end{gather*}
If we take the  sequence of positive definite matrices,
$(D_n)_{n\in \mathbb N}
$,
\begin{gather*}
D_n=
\frac{n^8}{(n^2-1)^2}
\begin{pmatrix}
%\frac
{1}/{n^2}+{1}/{n^4}
&
-{2}/{n^3}\\ %[6pt]
-{2}/{n^3}&{1}/{n^2}+{1}/{n^4}
\end{pmatrix} \, ,
\end{gather*}
%\begin{gather*}
then
$D_n^{1/2}=\frac{n^4}{n^2-1}
\begin{pmatrix}
1/n&-1/n^2 \\
-1/n^2&1/n
\end{pmatrix}
\, $,
$D^{-1/2}_n=
\begin{pmatrix}
1/n&1/n^2 \\
1/n^2&1/n
\end{pmatrix}
$,
%\end{gather*}
$D_n^{-1/2}D_{n-1}^{1/2}\to I_{N%\times N
} \, $.
From here
\begin{gather*}
D_n^{-1/2}A_n \, D_n^{-1/2}=\left(
\begin{matrix}
2+{1}/{n^3}+{5}/{n^2}+{7}/{n} & ({7 n^3+7 n^2+1})/{n^4} \\%[10pt]
7+{1}/{n^2}+{7}/{n} & 5+{1}/{n^3}+{2}/{n^2}+{7}/{n}
\end{matrix}
\right) \, ,
\end{gather*}
and so,
$D_n^{-1/2}A_nD_n^{-1/2}
\to
\begin{pmatrix}
2&0\\
7&5
\end{pmatrix}
\, $,
\begin{gather*}
D_n^{-1/2}B_nD_n^{-1/2}=\left(
\begin{matrix}
({9 n^2+3 n+4})/{n^4} & ({8 n^3+5 n+3})/{n^4} \\%[10pt]
({9 n^4+3 n+4})/{n^5} & ({8 n^5+n^3+4 n+3})/{n^5}
\end{matrix}
\right) \, ,
\end{gather*}
$ D_n^{-1/2}B_nD_n^{-1/2}\to \begin{pmatrix}
0&0 \\ 0&8
\end{pmatrix} \, $,	
\begin{gather*}
D_n^{-1/2}C_nD_n^{-1/2}=\left(
\begin{matrix}
2+{3}/{n^2} & {5}/{n} \\%[10pt]
({3 n^2+2})/{n^3} & 1+{4}/{n^2}
\end{matrix}
\right) \, , \ \
D_n^{-1/2}C_nD_n^{-1/2}\to\begin{pmatrix}
2&0\\
0&1
\end{pmatrix} \, .
\end{gather*}
Theorem~\ref{main result} yields
\begin{gather*}
\lim_{n\to \infty} D_n^{1/2} V_{n-1}^{D_n}(z) \, \left(V_n^{D_n}(z)\right)^{-1} A_{n-1}^{-1} D_n^{1/2} =\int \frac{dW_{C,B,A}(t)}{z-t} \, , \ \ z \in \mathbb{C} \setminus  \Gamma,
   \\
\lim_{n\to \infty} D_n^{1/2}C_n^{-1} \left( G_{n}^{D_n}(z)\right)^{-1} G_{n-1}^{D_n}(z) \, D_n^{1/2} =\int \frac{dW_{C,B,A}(t)}{z-t} \, , \ \ z\in\mathbb{C}\setminus  \Gamma \, ,
\end{gather*}
where $dW_{C,B,A}(x)$ is the matrix of measures of the sequence of biorthogonal polynomials satisfying for all $n \in \mathbb N \, $,
\begin{gather*}
x U_n^{C,B,A}(x)=\begin{pmatrix}
2 & 0 \\
0 & 1
\end{pmatrix} U_{n+1}^{C,B,A}(x)+\begin{pmatrix}
0 & 0 \\ 0 & 8
\end{pmatrix}U_n^{C,B,A}(x)+\begin{pmatrix}
2 & 0\\
7 & 5
\end{pmatrix}
U_{n-1}^{C,B,A}(x) \, ,\\
xT_{n}^{A,B,C}(x)=T_{n+1}^{A,B,C}(x)\begin{pmatrix}
2 & 0\\
7 & 5
\end{pmatrix}+T_{n}^{A,B,C}(x)\begin{pmatrix}
0 & 0 \\ 0 & 8
\end{pmatrix}+T_{n-1}^{A,B,C}(x)\begin{pmatrix}
2 & 0\\
0 & 1
\end{pmatrix} \, ,
\end{gather*}
with initial conditions \ \
$U_0^{C,B,A}(x)=T_0^{C,B,A}(x)=I_{N%\times N
}$, %\ and \ \ 
$U_{-1}^{C,B,A}(x)=T_{-1}^{C,B,A}(x)=\pmb 0_{N
%\times N
} \, $.	
	%	\begin{equation}
	%	\lim_{n\to\infty} D_n^{-1/2}A_nD_n^{-1/2}=\begin{pmatrix}
	%	2&0\\
	%	7&5
	%	\end{pmatrix},\quad \lim_{n\to\infty}D_n^{-1/2}B_nD_n^{-1/2}=\begin{pmatrix}
	%	0&0\\0&8
	%	\end{pmatrix}\quad \lim_{n\to\infty}D_n^{-1/2}B_nD_n^{-1/2}= \begin{pmatrix}
	%	0&0\\0&8
	%	\end{pmatrix}
	%	\end{equation}
\end{example}

\section{The singular case} \label{sec:5}

In this section, we study the case when the limit matrices $A$ or $C$ are singular. In~\cite{Duran2}, for the case of symmetric recurrence coefficients the authors proved that the ratio asymptotic also exists in the singular case, although they cannot compute explicitly the degenerate positive definite matrix of measures appearing in the limit. A similar argument can be applied for obtaining the existence of outer ratio asymptotics for matrix polynomials satisfying recurrence relations with nonsymmetric coefficients.
 %The result reads as follows.
\begin{teo}\label{main result singular}
Let
$\{ V_n \}_{n\in \mathbb N}
$,
$\{ G_n \}_{n\in \mathbb N}
$
be the sequences of biorthogonal matrix polynomials with respect to a quasidefinite matrix of measures, $W$, satisfying the recurrence relations~\eqref{5trr_Vn},~\eqref{5trr_Gn}. Let us  suppose that~$(D_n)_{n \in \mathbb N}$ is an increasing sequence of matrices. Under the hypotheses of Theorem~\ref{main result}, if we assume the limit matrix~$A$ to be singular, then there exists a matrix of measures~$\nu_1$, for~which
\begin{gather*}
\lim_{n\to \infty} D_n^{1/2} V_{n-1}^{D_n}(z) \, \left(V_n^{D_n}(z)\right)^{-1} A_{n-1}^{-1} D_n^{1/2}=\int \frac{d\nu_1(t)}{z-t} \, , \ \ z \in \mathbb{C} \setminus  \Gamma \, .
\end{gather*}
%In a similar way
Moreover, if the matrix $C$ is singular, then there exists a matrix of measures~$\nu_2$, such~that
\begin{gather*}
\lim_{n\to \infty} D_n^{1/2}C_n^{-1} \left(G_{n}^{D_n} (z) \right)^{-1}(z) \, G_{n-1}^{D_n}(z) \, D_n^{1/2}=\int \frac{d\nu_2(t)}{z-t} \, , \ \  z \in\mathbb{C}\setminus  \Gamma \, .
\end{gather*}
Moreover, if we write
$F_{A}(z)= \int\frac{d\nu_1(t)}{z-t}$ and $F_{C}(z)= \int\frac{d\nu_2(t)}{z-t}$, with $z\notin \mbox{{\em supp\/}}(\nu)$, then these analytic matrix functions satisfy the matrix equation
\begin{gather}\label{matrix_equation}
C \, F(z) \, A \, F(z) + (B-zI) \, F(z) + I= \pmb 0_{N%\times N
} \, .
\end{gather}
\end{teo}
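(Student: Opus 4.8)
The plan is to obtain the existence of the two limits by reducing to the non-singular situation already settled in Theorem~\ref{main result}, and then to derive the functional equation~\eqref{matrix_equation} by passing to the limit in the three-term recurrence relations. For the existence of $\nu_1$ and $\nu_2$, I would mimic the argument of Dur\'an and Daneri-Vias in~\cite{Duran2}: even when $A$ (resp.\ $C$) is singular, the scaled recurrence coefficients $D_n^{-1/2}A_nD_n^{-1/2}$, $D_n^{-1/2}B_nD_n^{-1/2}$, $D_n^{-1/2}C_nD_n^{-1/2}$ still converge, and the functions
\[
F_n^{V}(z) = D_n^{1/2} V_{n-1}^{D_n}(z)\,\bigl(V_n^{D_n}(z)\bigr)^{-1}A_{n-1}^{-1}D_n^{1/2}, \qquad
F_n^{G}(z) = D_n^{1/2}C_n^{-1}\bigl(G_n^{D_n}(z)\bigr)^{-1}G_{n-1}^{D_n}(z)\,D_n^{1/2}
\]
form a locally bounded family of analytic matrix functions on $\mathbb{C}\setminus\Gamma$ (here one uses Theorem~\ref{lema:zeros} to keep the zeros of $V_n^{D_n}$ and $G_n^{D_n}$ inside a fixed disk, so that $\Gamma$ is compact and the families are normal there). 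By Montel's theorem every subsequence has a locally uniformly convergent sub-subsequence; the method of moments used in the proof of Theorem~\ref{auxiliary_varying_coeff_theorem} shows that all such limits have the same moments, hence coincide. One perturbs $A$ slightly to $A_\varepsilon = A + \varepsilon I$ (nonsingular), applies Theorem~\ref{main result} to get convergence to $\int dW_{C,B,A_\varepsilon}(t)/(z-t)$, and lets $\varepsilon\to 0$; continuity of the whole construction in the recurrence coefficients gives a limiting matrix of measures $\nu_1$ (which may now be degenerate and is not explicitly computable), and symmetrically $\nu_2$.

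For the functional equation, I would work directly with $F_n^{G}$ (the argument for $F_n^{V}$ is entirely analogous, with the roles of left/right multiplication interchanged, and yields the \emph{same} equation because the limit is the same Cauchy transform). Write $g_n(z) = G_n^{D_n}(z)\,D_n^{1/2}$; from the scaled recurrence~\eqref{eq:scaledGn} rewritten as in the proof of Theorem~\ref{main result} we have
\[
z\,g_n(z) = g_{n-1}(z)\,(D_n^{-1/2}A_{n-1}D_n^{-1/2}) + g_n(z)\,(D_n^{-1/2}B_nD_n^{-1/2}) + g_{n+1}(z)\,(D_n^{-1/2}C_{n+1}D_n^{-1/2}).
\]
Multiplying this on the right by $g_n(z)^{-1}$ and using that $D_n^{-1/2}D_{n\pm1}^{1/2}\to I_N$ turns the recurrence into a relation among the three ratios $g_{n-1}g_n^{-1}$, $I$, $g_{n+1}g_n^{-1}$; one then multiplies by appropriate powers of $C_n^{-1}$ and $D_n^{1/2}$ to express everything in terms of $F_n^{G}$ and $F_{n+1}^{G}$, and lets $n\to\infty$. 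Using $F_n^{G}\to F_C$ and $F_{n+1}^{G}\to F_C$ and the coefficient limits $A,B,C$, the product $F_{n+1}^{G}F_n^{G}$ converges to $F_C(z)^2$-type term; after collecting terms the identity becomes $C\,F_C(z)\,A\,F_C(z) + (B - zI)\,F_C(z) + I = \pmb 0_N$. The same manipulation applied to $F_n^{V}$ gives the identical equation for $F_A$, so in fact $F_A = F_C = F$ and~\eqref{matrix_equation} holds. (If desired, the equation can be checked first for nonsingular $A$ by recognizing $F$ as the resolvent-type function of the limit block-Jacobi operator — in the convergent case this is Corollary~2.3 of~\cite{Duran1} — and then extended to the singular case by the $\varepsilon\to 0$ continuity argument.)

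The main obstacle I anticipate is the bookkeeping in passing from the recurrence for $g_n$ to a closed relation purely in terms of $F_n^{G}$: the conjugating matrices $D_n^{1/2}$, $D_n^{-1/2}$, $C_n$ and $C_{n+1}$ do not commute, so one must be careful that each stray factor of the form $D_n^{-1/2}D_{n-1}^{1/2}$, $D_n^{1/2}C_n^{-1}D_n^{1/2}\cdot D_n^{-1/2}C_nD_n^{-1/2}$, etc., genuinely tends to $I_N$ or to one of $A,B,C$ in the limit, and that no term blows up despite the unboundedness of the original coefficients. The normality/uniqueness part is comparatively routine given Theorem~\ref{lema:zeros} and the moment argument already in the excerpt, and the $\varepsilon$-perturbation step is standard once one notes that $W_{C,B,A_\varepsilon}$ depends continuously (in the weak sense, via moments) on $A_\varepsilon$.
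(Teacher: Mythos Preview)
Your derivation of the functional equation~\eqref{matrix_equation} is essentially the paper's argument: multiply the scaled recurrence~\eqref{eq:scaledVn} on the right by $\bigl(V_n^{D_n}(z)\bigr)^{-1}D_n^{-1/2}$ (and~\eqref{eq:scaledGn} on the left by $D_n^{-1/2}\bigl(G_n^{D_n}(z)\bigr)^{-1}$), set $k=n$, and pass to the limit. Your remarks about the non-commuting conjugations are appropriate caveats but do not change the strategy.

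For the \emph{existence} of $\nu_1$ and $\nu_2$, however, your route diverges from the paper's and contains a gap. The $\varepsilon$-perturbation step does not work as written: replacing the limit $A$ by $A_\varepsilon=A+\varepsilon I$ does \emph{not} place you in the hypotheses of Theorem~\ref{main result}, because the actual scaled coefficients $D_n^{-1/2}A_nD_n^{-1/2}$ still converge to $A$, not to $A_\varepsilon$; the polynomials $V_n^{D_n}$ are unchanged, so there is nothing to which Theorem~\ref{main result} can be applied. To make a perturbation argument rigorous you would have to perturb the $A_n$'s themselves, control the dependence of the polynomials on this perturbation, and then handle a double limit---considerably more work than you indicate. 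Separately, your appeal to ``the method of moments used in the proof of Theorem~\ref{auxiliary_varying_coeff_theorem}'' is also problematic in the singular case: that proof is built on the Chebyshev polynomials $T_l^{A,B,C}$, whose very definition~\eqref{right_chebyshev_matrix_polynomials} requires inverting $A$.

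The paper's fix is much more direct and avoids any perturbation: one reruns the moment argument of Theorem~\ref{auxiliary_varying_coeff_theorem} with the monomials $x^{l}I_N$ in place of $T_l^{A,B,C}(x)$ (this is precisely the device of Section~4 in~\cite{Duran2}). The induction on $l$ then only uses the recurrences for $R_{n,k}$ and $S_{n,k}$---never $A^{-1}$---and yields convergence of all moments $\int x^{l}\,d\mu_{n,k}(x)$; since Theorem~\ref{lema:zeros} confines the supports to a fixed compact disk, the moment problem is determinate and the limit measure (possibly degenerate) is uniquely determined. Finally, note that the paper only asserts that $F_A$ and $F_C$ each satisfy~\eqref{matrix_equation}; it does not claim $F_A=F_C$, and your sentence ``so in fact $F_A=F_C=F$'' is an extra assertion that would need its own justification (quadratic matrix equations need not have a unique analytic solution).
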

\begin{proof}
Following the technique given in Section~4 of~\cite{Duran2}, it is enough to reduce the result to the case of varying recurrence coefficients and to use the matrix polynomials $x^l \, I_{N%\times N
}$ instead of $U_n^{C,B,A}(x)$ and $T_n^{A,B,C}(x)$ in the proof %s
of Theorem%s~\ref{auxiliary_varying_coeff_theorem_1} and
~\ref{auxiliary_varying_coeff_theorem}.%, respectively.

In order to prove that the Hilbert transforms of the measures~$\nu_1$ and $\nu_2$ satisfy the matrix equation~\eqref{matrix_equation},
let us remind that the polynomials $V_n^{D_k}(x)$ and~$G_n^{D_k}(x)$ satisfy~\eqref{eq:scaledVn},~\eqref{eq:scaledGn}, i.e.
\begin{gather*}
%\label{eqV_n^l}
x D_k^{1/2} V_n^{D_k}(x) =D_k^{-1/2}A_n \, V_{n+1}^{D_k}(x)
+D_k^{-1/2} B_n \, V_n^{D_k}(x)+D_k^{-1/2} C_n \, V_{n-1}^{D_k}(x) \, ,
%\end{multline}
%and
%\begin{multline}
 \\
%\label{eqG_n^r}
\phantom{0}\mbox{\hspace{-.5cm}}x  G_n^{D_k}(x) D_k^{1/2}  = G_{n-1}^{D_k}(x) A_{n-1}  D_k^{-1/2}
+ G_n^{D_k}(x)  B_n  D_k^{-1/2}
+ G_{n+1}^{D_k}(x)  C_{n+1}  D_k^{-1/2}  .
\end{gather*}
Let us multiply
%formula~\eqref{eq:scaledVn}
the  right hand side of the first one by $ \left(V_n^{D_n}\right)^{-1}(x) \, D_n^{-1/2}$,
and the left hand side of the second one
%of equation~\eqref{eq:scaledGn}
by $D_n^{-1/2} \left(G_n^{D_n}\right)^{-1}(x)$. Now, we put $k=n$ and take limit as $n$ tends to infinity.
Then, from Theorem~\ref{main result}
\begin{gather*}
 \lim_{n\to \infty} D_n^{-1/2} A_n \, V_{n+1}^{D_k}(x) \, \left(V_n^{D_n}(x)\right)^{-1} D_n^{-1/2} = F_{A}^{-1}, \\
 \lim_{n\to \infty} D_n^{-1/2} \left(G_n^{D_n}\right)^{-1}(x) \, G_{n+1}^{D_n}(x) \, C_{n+1} \, D_n^{-1/2} = F_{C}^{-1} \, ,
\end{gather*}
and the result follows.
\end{proof}

%\section*{Acknowledgements}

%%%%%%%%%%%%%%%%%%%%%%%%%%%%%%%%%%%%%%%%%%%%%

\end{document}